\newcommand{\N}{{\mathbb N}}
\newcommand{\Z}{{\mathbb Z}}
\newcommand{\Q}{{\mathbb Q}}
\newcommand{\R}{{\mathbb R}}
\newcommand{\semi}{{\mathbb o}}
\newcommand{\lie}{{\mathfrak g}}
\newcommand{\Aut}{{\rm Aut}}
\newcommand{\Aff}{{\rm Aff}}
\newcommand{\aff}{{\rm aff}}
\newcommand{\Endo}{{\rm Endo}}
\newcommand{\dd}{\delta}
\newcommand{\D}{{\mathfrak D}}
\newcommand{\A}{{\mathfrak A}}
\newcommand{\B}{{\mathfrak B}}
\newcommand{\GL}{{\rm GL}}
\newtheorem{theorem}{Theorem}[section]
\newtheorem{proposition}[theorem]{Proposition}
\newtheorem{lemma}[theorem]{Lemma}
\newtheorem{corrolary}[theorem]{Corrolary}
\newtheorem{definition}[theorem]{Definition}
\newtheorem*{remark}{Remark}
\newcommand{\ds}{\displaystyle}
\newcommand{\ra}{\rightarrow}
\begin{document}
\title{\bf Nielsen zeta functions for maps on infra-nilmanifolds are rational}
\author{Karel Dekimpe and Gert-Jan Dugardein\\
KULeuven Kulak, E.\ Sabbelaan 53, B-8500 Kortrijk}
\date{\today}
\maketitle

\begin{abstract}
In this paper we will show that for any map $f$ on an infra-nilmanifold, the Nielsen number 
$N(f)$ of this map is either equal to $|L(f)|$, where $L(f)$ is the Lefschetz number of that map, or equal to the expression $|L(f)-L(f_+)|$, where $f_+$ is a lift of $f$ to a 2-fold covering 
of that infra-nilmanifold. By exploiting the exact nature of this relationship 
for all powers of $f$, we prove that 
the Nielsen dynamical zeta function for a map on an infra-nilmanifold is always a rational function. 
\end{abstract}
\section{Introduction}
Let $X$ be a compact polyhedron and $f:X\ra X$ be a self-map. We can attach
 two different numbers to this map $f$, each one providing information on the number of fixed points
 of $f$. The first one is the Lefschetz number $L(f)$ which is defined as
 \[ L(f)= \sum_{i=0}^{{\rm dim}\;X} (-1)^i {\rm Tr} \left( f_{\ast,i} : H_i(X,\Q) \ra H_i(X,\Q)\right).\]
The main result about the Lefschetz number is that any map homotopic to $f$ has at least one fixed point, when $L(f)\neq 0$. The Nielsen number $N(f)$, on the other hand, is harder to define. It will always be a nonnegative integer which, in general, will be a lot harder to compute than $L(f)$. It gives more information about the self-map $f$, though, since any map homotopic to $f$ will have at least $N(f)$ fixed points. We refer the reader to \cite{brow71-1} for more information on both the Lefschetz and the Nielsen number.

\medskip
In discrete dynamical systems, both numbers are used to define a so-called dynamical zeta 
function (\cite{fels00-2}). The Lefschetz zeta function of $f$, which was introduced by S.~Smale (\cite{smal67-1}), is given by 
\[ L_f(z)=\exp\left( \sum_{k=1}^{+\infty}\frac{L(f^k)}{k}z^k\right).\]
Analogously, A.~Fel'shtyn \cite{fels88-1,fp85-1} 
introduced the Nielsen zeta function, which  is given by 
\[ N_f(z)=\exp\left( \sum_{k=1}^{+\infty}\frac{N(f^k)}{k}z^k\right)\]

In \cite{smal67-1} the following theorem was obtained (although Smale only considered diffeomorphisms on compact manifolds in his paper, the Lefschetz dynamical zeta function is 
defined for all maps on all compact polyhedra and his result also holds for all of these maps):
\begin{theorem}\label{Smale}
The Lefschetz zeta function for self-maps on compact polyhedra is rational.
\end{theorem}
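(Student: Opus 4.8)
The plan is to reduce the statement to the classical determinant formula for the generating function of the traces of the powers of a linear map. First, I would recall that, since $X$ is a compact polyhedron, each homology group $H_i(X,\Q)$ is a finite-dimensional $\Q$-vector space and vanishes for $i>{\rm dim}\;X$; write $A_i$ for the matrix of $f_{\ast,i}$ with respect to some basis. Because homology is functorial, $(f^k)_{\ast,i}=A_i^{\,k}$, so that
\[ L(f^k)=\sum_{i=0}^{{\rm dim}\;X}(-1)^i\,{\rm Tr}(A_i^{\,k}) \]
for every $k\ge 1$.

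Next, I would substitute this expression into the definition of $L_f(z)$ and interchange the finite sum over $i$ with the sum over $k$ inside the exponential, obtaining
\[ L_f(z)=\prod_{i=0}^{{\rm dim}\;X}\exp\!\left((-1)^i\sum_{k=1}^{+\infty}\frac{{\rm Tr}(A_i^{\,k})}{k}z^k\right). \]
The crucial computational step is the identity, valid for any square matrix $A$ over a field,
\[ \exp\!\left(\sum_{k=1}^{+\infty}\frac{{\rm Tr}(A^k)}{k}z^k\right)=\frac{1}{\det(I-zA)}, \]
which one proves by passing to the algebraic closure, putting $A$ in triangular form, and combining $\sum_{k\ge 1}\lambda^k z^k/k=-\log(1-\lambda z)$ with $\det(I-zA)=\prod_j(1-\lambda_j z)$, the product running over the eigenvalues $\lambda_j$ of $A$.

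Combining the last two displays, I would conclude
\[ L_f(z)=\prod_{i=0}^{{\rm dim}\;X}\det(I-zA_i)^{(-1)^{i+1}}=\frac{\prod_{i\ {\rm odd}}\det(I-zA_i)}{\prod_{i\ {\rm even}}\det(I-zA_i)}, \]
which exhibits $L_f(z)$ as a quotient of two polynomials in $z$ with rational coefficients, hence as a rational function; each factor $\det(I-zA_i)$ is visibly a polynomial and is independent of the chosen basis.

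The only delicate points here are bookkeeping rather than conceptual. One must justify the manipulations either as identities of formal power series in $\Q[[z]]$ --- the cleanest route, since every series in sight lies in $1+z\Q[[z]]$, on which $\log$ and $\exp$ are mutually inverse bijections --- or, if one insists on honest analytic functions, check that all the series converge on a common disc about $0$. I expect that the only step requiring real care is the trace--determinant identity above, together with getting the sign $(-1)^{i+1}$ in the exponent right; once these are in place, rationality is immediate.
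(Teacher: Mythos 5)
Your proof is correct and is exactly the classical argument; the paper itself does not reprove this statement but simply cites Smale \cite{smal67-1}, whose proof proceeds by the same reduction of $L_f(z)$ to the alternating product $\prod_{i}\det(I-zA_i)^{(-1)^{i+1}}$ via the trace--determinant identity. Your signs and the formal-power-series justification are both in order, so nothing further is needed.
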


It has been shown that the Nielsen zeta function has a positive radius of convergence (\cite{fp85-1}), 
but unlike the Lefschetz zeta function, the Nielsen zeta function does not have to be rational in general.
The question of when the Nielsen zeta function is rational has been studied in several papers, e.g.\ \cite{fels00-2,fels00-1,fh99-1,li94-1,fp85-1,wong01-1}.

\medskip

In this paper we treat this problem for maps on infra-nilmanifolds. As it is known that the Nielsen zeta function on nilmanifolds is always rational it was very natural to ask the same question for 
infra-nilmanifolds.   
Until now, only a very partial result on this problem can be found in \cite[Theorem 4]{wong01-1}
where a (rather technical) condition is given under which the rationality of the 
Nielsen zeta function is guaranteed.

\medskip

The main result of this paper is that the Nielsen zeta function of any map on any infra-nilmanifold is a rational function (Corollary~\ref{mainresult}).
In order to obtain this result we show that for any map $f$ on an infra--nilmanifold, we either 
have $N(f)=|L(f)|$ or $N(f)=|L(f) - L(f_+)|$ where $f_+$ is a lift of $f$ to a 2-fold covering of 
the given infra-nilmanifold $f$. 
In fact, it was already known that many maps on infra-nilmanifolds satisfy the Anosov relation 
(\cite{anos85-1,ddm04-1,ddm05-1,ddp11-1,fh86-1,km95-1}) and these are exactly the maps for which the first condition holds. The second condition now clearly shows what happens for those maps that do not satisfy the Anosov relation.

\medskip

Using these relations we are able to describe the Nielsen zeta function of 
$f$ in terms of the Lefschetz zeta function of $f$ and $f_+$ from which the rationality then easily follows using Smale's result.

\section{Infra-nilmanifolds}
Let us now describe the class of infra-nilmanifolds in some detail. Any infra-nilmanifold is modeled on a connected and simply connected nilpotent Lie group $G$. Given such a Lie group $G$,
we consider its affine group which is the semi-direct product $\Aff(G)= G\semi \Aut(G)$. The group 
$\Aff(G)$ acts on $G$ in the following way:
\[ \forall (g,\alpha)\in \Aff(G),\, \forall h \in G: \;\;^{(g,\alpha)}h= g \alpha(h).\]
Note that when $G=\R^n$, $\Aff(\R^n)$ is the usual affine group, acting in the usual way on 
$\R^n$. Note also that since $\R^n$ is simply connected and abelian (hence a fortiori nilpotent), this case is included in our discussion. We will use $p:\Aff(G)=G\semi \Aut(G) \ra \Aut(G)$ to denote the natural projection on the second factor.  

\begin{definition} Let $G$ be a connected and simply connected nilpotent Lie group. 
A subgroup $\Gamma \subseteq \Aff(G)$ is called an almost--crystallographic group (modeled on $G$) if and only if $p(\Gamma)$ is finite and $\Gamma\cap G$ is a uniform and discrete subgroup of $G$. The finite group $F=p(\Gamma)$ is called the holonomy group of $\Gamma$.
\end{definition}

Being a subgroup of $\Aff(G)$, any almost--crystallographic group $\Gamma$ acts on $G$. This action is properly discontinuous and cocompact. In case $\Gamma$ is torsion-free, this action is free and the quotient space $\Gamma\backslash G$ is a manifold (with universal covering space $G$ and fundamental group $\Gamma$). These manifolds are exactly the ones called ``infra-nilmanifolds''.
\begin{definition}
A torsion-free almost--crystallographic group $\Gamma\subseteq \Aff(G) $ 
is called an almost--Bieberbach group, and the corresponding manifold $\Gamma\backslash G$ is said to be an infra--nilmanifold (modeled on $G$). When $\Gamma \subseteq G$, i.e.\ when the holonomy group $p(\Gamma)$ is trivial, the corresponding manifold $\Gamma\backslash G$ is a 
nilmanifold.  
\end{definition} 

For any almost--Bieberbach group $\Gamma$ modeled on a Lie group $G$, we have that 
$N=G\cap \Gamma$ is of finite index in $\Gamma$ and hence, the infra-nilmanifold $\Gamma\backslash G$ is finitely covered by the nilmanifold $N\backslash G$, explaining the name ``{\em infra}''--nilmanifold.  In case $G=\R^n$, we talk about crystallographic groups and Bieberbach groups. In this case, the infra--nilmanifolds are the compact flat manifolds and any such manifold is covered by a torus $T^n$ (because $N\cong \Z^n$).

\medskip

In order to study the Nielsen theory of an infra--nilmanifold, we need to understand all maps on such a manifold up to homotopy. A complete description of these maps, is given by the work of K.B.\ Lee \cite{lee95-2}. Here we formulate the results for maps between two infra-nilmanifolds modeled on the same nilpotent Lie group, but this result has a straightforward extension to infra-nilmanifolds modeled on different Lie groups.  In order to formulate this result, we extend the affine group of $G$ to the semigroup of affine 
endomorphisms $\aff(G)$ of $G$. Here $\aff(G)=G\semi \Endo(G)$, where $\Endo(G)$ is the semigroup of endomorphisms of $G$. An element of $\aff(G)$ is a pair $(\dd,\D)$, where $\dd\in G$ and $\D\in \Endo(G)$. Such an element should be seen as an ``affine map'' on the Lie group $G$
\[ (\dd,\D): \; G \rightarrow G:\; h \mapsto \dd \D(h).\]
Note that in this way $\Aff(G)\subseteq \aff(G)$.

\begin{theorem}[K.B.\ Lee \cite{lee95-2}]
\label{leemaps} Let $G$ be a connected and simply connected nilpotent Lie group and suppose that $\Gamma, \Gamma'\subseteq \Aff(G)$ are two almost-crystallographic groups modeled on $G$. 
Then for any homomorphism $\varphi: \Gamma\rightarrow \Gamma'$ there 
exists an element $  (\dd, \D)\in \aff(G)$ such that 
\[ \forall \gamma \in \Gamma: \; \varphi(\gamma) (\dd, \D) =  (\dd, \D) \gamma.\] 
\end{theorem}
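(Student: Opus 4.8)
The plan is to deduce the theorem from two classical facts: Mal'cev's rigidity theorem for lattices in connected, simply connected nilpotent Lie groups, and the observation that a finite group acting by affine transformations on such a Lie group always has a fixed point. First I would pass to a convenient sublattice. Put $N=\Gamma\cap G$ and $N'=\Gamma'\cap G$; by definition these are uniform discrete subgroups of $G$, of finite index in $\Gamma$ and $\Gamma'$ respectively. Since $\Gamma'/N'$ is finite, $\varphi^{-1}(N')$ has finite index in $\Gamma$, hence $N\cap\varphi^{-1}(N')$ has finite index in $N$ and is again a uniform lattice in $G$; its normal core $M$ in $\Gamma$ is then a finite-index normal subgroup of $\Gamma$, still a lattice in $G$, with $\varphi(M)\subseteq N'\subseteq G$. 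As $M$ is a lattice in $G$, Mal'cev rigidity extends $\varphi|_M\colon M\to G$ uniquely to a Lie group homomorphism $\D_0\colon G\to G$; this is the first approximation to the linear part $\D$ we are after.

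Next I would record how $\D_0$ interacts with the holonomy. For $\gamma=(a,A)\in\Gamma$ write $\varphi(\gamma)=(a',A')\in\Gamma'$. Since $M$ is normal in $\Gamma$, conjugation by $\gamma$ carries $M$ into itself, and on all of $G$ it is the automorphism $g\mapsto aA(g)a^{-1}$. Applying $\varphi$ to the relation $\gamma m\gamma^{-1}\in M$ (for $m\in M$) and using uniqueness of Mal'cev extensions on both sides gives $\D_0(aA(g)a^{-1})=a'A'(\D_0(g))a'^{-1}$ for all $g\in G$; writing $\mu_x$ for conjugation by $x$ in $G$ and using that $\D_0$ is a homomorphism, this rearranges to the key identity $\D_0\circ A=\mu_{\D_0(a)^{-1}a'}\circ A'\circ\D_0$ on $G$.

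The crux is to produce the translational part $\dd$, after which the linear part is forced: if some $(\dd,\D)$ solves the equation, testing it on $M$ and using Mal'cev uniqueness again forces $\D=\mu_{\dd^{-1}}\circ\D_0$. So I would define, for $\gamma=(a,A)\in\Gamma$, the affine map $\sigma(\gamma)\colon G\to G$, $\sigma(\gamma)(x)=\D_0(a)^{-1}a'A'(x)$. Using the identity from the previous paragraph one checks that $\sigma\colon\Gamma\to\Aff(G)$ is a homomorphism, while it is immediate that $\sigma$ is trivial on $M$, so $\sigma$ descends to the finite group $\Gamma/M$. Since a finite group acting by affine transformations on a connected, simply connected nilpotent Lie group has a fixed point — by induction on $\dim G$ along the lower central series, the inductive step being the classical centre-of-mass argument on a real vector group — we may pick $\dd\in G$ with $\sigma(\gamma)(\dd)=\dd$ for all $\gamma$, i.e.\ $a'A'(\dd)=\D_0(a)\dd$. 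Setting $\D:=\mu_{\dd^{-1}}\circ\D_0\in\Endo(G)$, a direct computation feeding this fixed-point identity into the key identity above verifies simultaneously $A'\D=\D A$ and $a'A'(\dd)=\dd\D(a)$, which is exactly $\varphi(\gamma)(\dd,\D)=(\dd,\D)\gamma$ for all $\gamma\in\Gamma$.

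The classical ingredients — Mal'cev's extension theorem with its uniqueness, stability of the lattice property under finite index, and existence of affine fixed points for finite groups — I would simply quote. The step I expect to be the real obstacle is checking that $\sigma$ is a homomorphism: this is the computation encoding why the construction closes up, and it is precisely here that the conjugation-derived identity $\D_0\circ A=\mu_{\D_0(a)^{-1}a'}\circ A'\circ\D_0$ is used essentially. Verifying afterwards that the single pair $(\dd,\D)$ satisfies both components of the required equation is then a short but slightly fiddly manipulation of that same identity together with $a'A'(\dd)=\D_0(a)\dd$.
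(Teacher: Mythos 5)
The paper does not prove this statement; it is quoted verbatim from K.B.\ Lee's paper \cite{lee95-2}, so there is no internal proof to compare against. Your argument is correct and is essentially the standard proof (and in substance Lee's own): pass to a finite-index normal lattice $M\subseteq \Gamma\cap G$ with $\varphi(M)\subseteq \Gamma'\cap G$, extend $\varphi|_M$ to an endomorphism $\D_0$ of $G$ by Mal'cev rigidity, derive the conjugation identity $\D_0\circ A=\mu_{\D_0(a)^{-1}a'}\circ A'\circ \D_0$ from normality of $M$ plus uniqueness of the extension, and obtain the translation part $\dd$ as a fixed point of the induced finite affine action of $\Gamma/M$ (i.e.\ from the vanishing of $H^1$ of a finite group with coefficients in $G$); the final verification that $(\dd,\mu_{\dd^{-1}}\circ\D_0)$ intertwines $\varphi$ checks out in both components.
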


Note that the equality $ \varphi(\gamma) (\dd, \D) =  (\dd, \D) \gamma$ makes sense, because it involves three elements of $\aff(G)$. From this equality one can see that the affine map $(\dd,\D)$ descends to a map 
\[\overline{(\dd,\D)}: \Gamma \backslash G \rightarrow \Gamma' \backslash G: \; \Gamma h \rightarrow \Gamma' \dd \D(h)\]
which exactly induces the morphism $\varphi$ on the level of the fundamental groups.
We will say that $\overline{(\dd,\D)}$ is induced from an affine map.

\medskip

Now, let $f:\Gamma\backslash G\ra \Gamma'\backslash G$  be any map between two infra-nilmanifolds and let $\tilde{f}:G \ra G$ be a lift of $f$. 
Then $\tilde{f}$ induces a morphism $\varphi:\Gamma\ra \Gamma'$ determined by $\varphi(\gamma) \circ \tilde{f} = \tilde{f}\circ \gamma$, 
for all $\gamma\in \Gamma$. From Theorem~\ref{leemaps} it follows that there also exists an affine map $(\dd,\D)\in \aff(G)$ satisfying 
$\varphi(\gamma) \circ (\dd,\D)= (\dd,\D)\circ \gamma$ for all $\gamma\in \Gamma$. Therefore, the induced map $\overline{(\dd,D)} $ and 
$f$ are homotopic.
\begin{definition}
With the notations above, we will say that $(\dd,D)$ is an affine homotopy lift of $f$.
\end{definition}

As the Nielsen and Lefschetz numbers are homotopy invariants, it will suffice to study maps induced by an affine map. 
For those maps there are very convenient formulae to compute the Lefschetz and the Nielsen numbers. Let 
us fix an infra-nilmanifold $\Gamma\backslash G$ which is determined by an almost--Bieberbach group $\Gamma\subseteq \Aff(G)$ and let $F\subseteq \Aut(G)$ be the holonomy group of $\Gamma$. We will denote the Lie algebra of $G$ by $\lie$. Recall that there is an isomorphism 
between $\Aut(G)$ and $\Aut(\lie)$  which associates to each automorphism $\A\in \Aut(G)$ its differential $\A_\ast\in \Aut(\lie)$ at the identity element of $G$. 

\begin{theorem}[J.B.\ Lee and K.B.\ Lee \cite{ll09-1}] \label{LeeForm}Let $\Gamma\subseteq \Aff(G)$ be an almost-Bieberbach group with holonomy group $F\subseteq \Aut(G)$. Let $M=\Gamma\backslash G$ be the associated infra-nilmanifold.  If 
 $f:M\ra M$ is a map with affine homotopy lift $(\dd, \D)$, then  
\[L(f)=\frac{1}{\# F}\sum_{\A \in F}\det(I-\A_\ast\D_\ast)\]
and
\[N(f)=\frac{1}{\# F}\sum_{\A \in F}|\det(I-\A_\ast\D_\ast)|.\]
(Here $I$ is the identity matrix).
\end{theorem}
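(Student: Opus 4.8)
\medskip
\textbf{Proof strategy.} Since $L$ and $N$ are homotopy invariants, we may assume, as observed above, that $f=\overline{(\dd,\D)}$ is induced by an affine map; write $N=\Gamma\cap G$ and let $\hat M=N\backslash G$ be the nilmanifold finitely covering $M$, with deck group $F\cong\Gamma/N$. The plan is to prove the two formulae by rather different routes: the Lefschetz formula via a cohomological averaging over $F$, and the Nielsen formula via a fixed point class analysis on the universal cover $G$.

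\medskip
\emph{The Lefschetz formula.} Here I would first invoke Nomizu's theorem: the real cohomology of the nilmanifold $\hat M$ is that of the Chevalley--Eilenberg complex $(\Lambda^\bullet\lie^\ast,d)$ of left-invariant forms, on which an automorphism $\A\in F$ acts as $\Lambda^\bullet(\A_\ast^\ast)$; a transfer argument for the finite covering $\hat M\ra M$ then identifies $H^\ast(M,\R)$ with the cohomology of the subcomplex $(\Lambda^\bullet\lie^\ast)^F$. The affine map $(\dd,\D)$ pulls a left-invariant form $\omega$ back to $\D_\ast^\ast\omega$ (translation by $\dd$ fixes left-invariant forms), while projecting the relation $\varphi(\gamma)(\dd,\D)=(\dd,\D)\gamma$ of Theorem~\ref{leemaps} onto the $\Endo(G)$-factor and differentiating shows $\A'_\ast\D_\ast=\D_\ast\A_\ast$ for a suitable $\A'\in F$; hence $\Lambda^\bullet(\D_\ast^\ast)$ preserves $(\Lambda^\bullet\lie^\ast)^F$ and there induces $f^\ast$ on $H^\ast(M,\R)$. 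The Hopf trace formula then replaces the alternating trace of $f^\ast$ on cohomology by its alternating trace on the finite complex $(\Lambda^\bullet\lie^\ast)^F$; expressing this via the averaging projector $\tfrac{1}{\#F}\sum_{\A\in F}\Lambda^\bullet(\A_\ast^\ast)$ onto the invariants, using $\Lambda^i(\A_\ast^\ast)\Lambda^i(\D_\ast^\ast)=\Lambda^i\big((\D_\ast\A_\ast)^\ast\big)$, and finally the elementary identities $\sum_i(-1)^i\operatorname{Tr}(\Lambda^i B)=\det(I-B)$ and $\det(I-XY)=\det(I-YX)$, one arrives at $L(f)=\tfrac{1}{\#F}\sum_{\A\in F}\det(I-\A_\ast\D_\ast)$.

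\medskip
\emph{The Nielsen formula.} Here I would lift $f$ to $\tilde f=(\dd,\D):G\ra G$ and work with fixed point classes. A fixed point of $f$ lifts to an $x\in G$ with $\gamma\tilde f(x)=x$ for some $\gamma\in\Gamma$, and the fixed point classes of $f$ correspond to the $\varphi$-twisted conjugacy classes of such $\gamma$. If $p(\gamma)=\A$, then $\gamma\tilde f$ is an affine map of $G$ with linear part $\A_\ast\D_\ast$ on $\lie$; as $G$ is simply connected nilpotent, $\gamma\tilde f$ has a unique fixed point whenever $\det(I-\A_\ast\D_\ast)\neq 0$, and then the corresponding class has index $\operatorname{sign}\det(I-\A_\ast\D_\ast)$, whereas if $\det(I-\A_\ast\D_\ast)=0$ the class is inessential --- empty, or with fixed point set a positive-dimensional $\exp$-image of $\ker(I-\A_\ast\D_\ast)$ and index $0$. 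Using again that $\D_\ast$ intertwines the holonomy actions on $\lie$, $\det(I-\A_\ast\D_\ast)$ is an invariant of the twisted conjugacy class of $\gamma$, so the essential classes split into groups indexed by the twisted conjugacy classes $[\A]$ of holonomy parts with $\det(I-\A_\ast\D_\ast)\neq 0$. It remains to count these classes: for a fixed $\A$, the fixed points arising from affine maps with linear part $\A_\ast\D_\ast$ project in $\hat M$ onto the finite set $(I-\A_\ast\D_\ast)^{-1}N/N$, of cardinality $|\det(I-\A_\ast\D_\ast)|$, and since $F$ acts freely on $\hat M$ the identifications over the various $\A$ are free; the orbit count then gives $N(f)=\tfrac{1}{\#F}\sum_{\A\in F}|\det(I-\A_\ast\D_\ast)|$.

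\medskip
\emph{Where the difficulty lies.} The cohomological half is essentially routine once the identification $H^\ast(M,\R)\cong H^\ast\big((\Lambda^\bullet\lie^\ast)^F\big)$ and the compatibility $f^\ast=\Lambda^\bullet(\D_\ast^\ast)$ have been established. I expect the genuine work to be on the Nielsen side, in two places: first, computing the fixed point index of an affine self-map of a simply connected nilpotent Lie group, where one must show the fixed point set is the $\exp$-image of $\ker(I-(\text{linear part}))$ and that the index equals $\operatorname{sign}\det(I-(\text{linear part}))$ when that set is a point and $0$ otherwise; and second, the combinatorial bookkeeping that organizes the twisted $\Gamma$-conjugacy classes through the extension $1\ra N\ra\Gamma\ra F\ra 1$, reduces their count to the classical count on the nilmanifold $\hat M$, and uses the freeness of the $F$-action to assemble the averaged sum, while verifying that the holonomy elements with $\det(I-\A_\ast\D_\ast)=0$ contribute nothing.
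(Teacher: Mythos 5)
The paper offers no proof of this statement: it is quoted verbatim as a theorem of J.\ B.\ Lee and K.\ B.\ Lee \cite{ll09-1}, so there is nothing internal to compare your argument against. Judged on its own, your outline is a sensible reconstruction of how such averaging formulae are actually proved, and you have correctly located where the weight of the argument sits. The Lefschetz half (Nomizu plus transfer to identify $H^\ast(M,\R)$ with $\bigl(\Lambda^\bullet\lie^\ast\bigr)^F$, the check that $\D_\ast^\ast$ preserves the invariants via the intertwining relation, then Hopf trace with the averaging projector and $\sum_i(-1)^i\operatorname{Tr}(\Lambda^iB)=\det(I-B)$) is essentially complete and routine.

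Two cautions on the Nielsen half. First, your count of essential classes over a fixed holonomy element $\A$ as the cardinality of $(I-\A_\ast\D_\ast)^{-1}N/N$ only makes literal sense when $G=\R^n$; for a genuinely nilpotent $G$ the statement ``the number of essential twisted conjugacy classes in the coset over $\A$ is $|\det(I-\A_\ast\D_\ast)|$'' is Anosov's theorem for nilmanifolds and requires an induction on the nilpotency class (or the corresponding fibration argument), not a lattice-index computation. Second, the final step --- assembling $N(f)$ as $\tfrac{1}{\#F}\sum_{\A\in F}|\det(I-\A_\ast\D_\ast)|$ from the counts on the covering nilmanifold --- is exactly the Nielsen-number averaging formula, and this is the genuinely delicate point of \cite{ll09-1}: one must show that every essential fixed point class of $f$ is covered by precisely $\#F$ essential classes distributed among the lifts $\bar\alpha\circ\hat f$, i.e.\ that essential classes of distinct lifts never coalesce into a single class downstairs with cancelling or merging indices, and that classes with $\det(I-\A_\ast\D_\ast)=0$ have index $0$ (fixed point set an $\exp$-image of a kernel, possibly empty). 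You flag both issues yourself, so the proposal is an honest and correctly oriented sketch rather than a complete proof; filling those two gaps is precisely the content of the cited reference.
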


\section{The holonomy representation and maps}
To any almost--crystallographic group, and hence also to any infra--nilmanifold, we can associate its holonomy representation.
\begin{definition}
Let $\Gamma\subseteq \Aff(G)$ be an almost--crystallographic group modeled on $G$ and with holonomy group $F=p(\Gamma) $. The holonomy representation of $\Gamma$ is the representation 
\[ \rho: F \rightarrow \GL(\lie): \A \mapsto \A_\ast\]
\end{definition}
By choosing a basis of $\lie$, we can identify $\lie$ with $\R^n$ for some $n$ and therefore we can view the holonomy representation $\rho$ as being a 
real representation $\rho: F \rightarrow \GL_n(\R)$. 

\medskip

There is a strong connection between this  holonomy representation and the affine homotopy lift of a map on the corresponding infra--nilmanifold.

\begin{proposition}[See \cite{ddp11-1}]\label{semi-affconj}
Let $\Gamma\subseteq \Aff(G)$ be an almost--Bieberbach group and let $M=\Gamma\backslash G$ be the corresponding infra--nilmanifold. Let $\rho:F \ra \GL(\lie)$ be the associated holonomy representation. If $f:M\ra M$ is a map with affine homotopy lift $(\dd,\D)$, there exists a function $\phi:F \ra F$ such that 
\[ \forall x \in F:\; \rho(\phi(x)) \D_\ast = \D_\ast \rho(x) .\]
\end{proposition}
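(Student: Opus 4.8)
The plan is to start from the two defining relations that encode the affine homotopy lift $(\dd,\D)$ of $f$: for a lift $\tilde f : G \to G$ we have $\varphi(\gamma)\circ \tilde f = \tilde f\circ\gamma$ for all $\gamma\in\Gamma$, and simultaneously $\varphi(\gamma)\circ(\dd,\D) = (\dd,\D)\circ\gamma$ by Theorem~\ref{leemaps}. Applying the projection $p:\Aff(G)\to\Aut(G)$ to the second relation kills the translational parts and yields, for each $\gamma\in\Gamma$, an equation of the form $p(\varphi(\gamma))\,\D = \D\,p(\gamma)$ in $\Endo(G)$ (here I abuse notation slightly: $\D$ need not be invertible, so one works in the endomorphism semigroup, but the relation still makes sense as $p(\varphi(\gamma))\circ\D = \D\circ p(\gamma)$ on the level of Lie group endomorphisms). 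Passing to differentials at the identity turns each $p(\gamma)\in F$ into $\rho(p(\gamma))\in\GL(\lie)$ and $\D$ into $\D_\ast\in\Endo(\lie)$, giving $\rho(p(\varphi(\gamma)))\,\D_\ast = \D_\ast\,\rho(p(\gamma))$.

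Next I would observe that $\varphi$ maps $\Gamma$ to $\Gamma'=\Gamma$, and since $\varphi$ is a homomorphism it sends $N = \Gamma\cap G$ (the maximal normal nilpotent subgroup) into $N$ — this is because $N$ is characteristic in $\Gamma$, being the unique maximal normal nilpotent subgroup, or equivalently the Fitting subgroup. Consequently $\varphi$ descends to a well-defined map on the finite quotients $F = \Gamma/N = p(\Gamma)$. More precisely, for $x\in F$ choose any $\gamma\in\Gamma$ with $p(\gamma)=x$, and set $\phi(x) := p(\varphi(\gamma))$. The first step to check is that this is independent of the choice of $\gamma$: if $p(\gamma)=p(\gamma')$ then $\gamma' = n\gamma$ for some $n\in N$, so $\varphi(\gamma') = \varphi(n)\varphi(\gamma)$ with $\varphi(n)\in N$, hence $p(\varphi(\gamma'))=p(\varphi(\gamma))$. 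Thus $\phi:F\to F$ is a well-defined function (it need not be a homomorphism, since $\D$ and hence $\varphi$ need not respect all of the structure — but we only claimed a function).

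Finally, combining the two points: for any $x\in F$ pick $\gamma\in\Gamma$ with $p(\gamma)=x$; then $\phi(x)=p(\varphi(\gamma))$ and the differentiated relation above reads $\rho(\phi(x))\,\D_\ast = \rho(p(\varphi(\gamma)))\,\D_\ast = \D_\ast\,\rho(p(\gamma)) = \D_\ast\,\rho(x)$, which is exactly the claimed identity. I would also remark that $\phi$ need not be uniquely determined by $f$ alone — it depends on the choice of affine homotopy lift — but any valid choice satisfies the intertwining relation.

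The routine parts are the bookkeeping with $p$ and the passage to differentials; the one genuine subtlety, which I expect to be the main obstacle to write cleanly, is justifying that $\varphi(N)\subseteq N$ and, more importantly, that the relation $p(\varphi(\gamma))\circ\D = \D\circ p(\gamma)$ survives the transition from $\aff(G)$ to $\Endo(G)$ to $\Endo(\lie)$ correctly when $\D$ is a non-invertible endomorphism — one must be careful that $p$ applied to a product $(\dd,\D)\gamma$ in the affine \emph{semigroup} still splits as $\D\circ p(\gamma)$, which it does by the definition of the semi-direct product structure on $\aff(G)=G\semi\Endo(G)$, but this deserves an explicit line. Once that is in place, the proposition follows immediately. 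This is, incidentally, precisely the argument given in \cite{ddp11-1}, so I would either cite it or reproduce this short version.
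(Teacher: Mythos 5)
The paper itself offers no proof of this proposition --- it is simply quoted from \cite{ddp11-1} --- but your reconstruction is exactly the argument of that reference: project the intertwining relation $\varphi(\gamma)(\dd,\D)=(\dd,\D)\gamma$ supplied by Theorem~\ref{leemaps} onto the $\Endo(G)$--factor (legitimate, as you note, because $p$ is multiplicative on the semigroup $\aff(G)=G\semi\Endo(G)$, so for $\gamma=(a,\A)$ and $\varphi(\gamma)=(b,\B)$ one gets $\B\D=\D\A$), differentiate at the identity, and read off $\phi$. That part is correct and complete.

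The one step I would push back on is your well--definedness argument. You justify $\varphi(N)\subseteq N$ for $N=\Gamma\cap G$ by saying $N$ is the Fitting subgroup and hence characteristic; but ``characteristic'' protects a subgroup against automorphisms, whereas $\varphi$ here is only an endomorphism (it need be neither injective nor surjective, since $\D$ may be singular). Already for homomorphisms between two almost--crystallographic groups the analogous statement fails --- one can map $\Z^2$ into the Klein bottle group sending a generator to an element with nontrivial holonomy, and by Theorem~\ref{leemaps} this is realized by a (singular) affine map --- so an appeal to ``characteristic'' does not suffice and the claim would need a genuine argument. Fortunately you do not need it: the proposition asserts only the \emph{existence} of some function $\phi:F\ra F$ satisfying the relation, so you may simply choose, for each $x\in F$, one $\gamma_x\in\Gamma$ with $p(\gamma_x)=x$ and set $\phi(x):=p(\varphi(\gamma_x))$; the projected and differentiated relation then gives $\rho(\phi(x))\D_\ast=\D_\ast\rho(x)$ for that choice, with no independence--of--representative check required. (This is also consistent with the paper's remark that $\phi$ need not be a homomorphism.) With that step replaced by a choice of representatives, your proof is correct.
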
  

It is tempting to believe that the function $\phi$ will be a morphism of groups, however as an example in \cite{ddp11-1} shows, this need not be the case.

\medskip

In the following proposition, which was mainly proved in \cite{ddp11-1}, 
we show how a map $f$ on an infra--nilmanifold induces a 
decomposition of the holonomy representation into two subrepresentations.

\begin{proposition}\label{decomprho}
Let $\rho:F\to \GL_n(\R)$ be a representation of a finite group $F$ and $\phi: F\to F$ be any function. Let $D$ be a linear transformation of $\R^n$ (which we view as an $n\times n$ matrix w.r.t.\  the standard basis). Suppose that $\rho(\phi(x))D=D\rho(x)$ for all $x\in F$. Then we can choose a basis of $\R^n$, such that $\rho=\rho_{\leq 1}\oplus \rho_{> 1}$, for representations $\rho_{\leq 1}:F\to \GL_{n_{\leq 1}}(\R)$ and $\rho_{> 1}:F\to \GL_{n_{> 1}}(\R)$ and such that $D$ can be written in block triangular form$$\left(\begin{array}{cc}
D_{\leq 1} & \ast \\
0 & D_{>1}
\end{array}\right),$$where $D_{\leq 1}$ and $D_{>1}$ only have eigenvalues of modulus $\leq 1$ and $>1$, respectively. 
\end{proposition}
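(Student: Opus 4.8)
The statement is really about a single linear map $D$ that is "twisted-equivariant" under the finite group $F$ via the function $\phi$, and I want to find an $F$-invariant (in the twisted sense) decomposition of $\R^n$ that separates the part where $D$ contracts or is neutral from the part where $D$ expands. The natural object to look at is the generalized eigenspace decomposition of $D$. Write $\R^n = V_{\leq 1} \oplus V_{>1}$, where $V_{\leq 1}$ is the sum of all generalized eigenspaces of $D$ (over $\mathbb C$, then taken real) for eigenvalues of modulus $\leq 1$, and $V_{>1}$ the sum of those with modulus $>1$. These are the standard $D$-invariant subspaces; I would choose a basis adapted to a flag $0 \subseteq V_{\leq 1} \subseteq \R^n$ rather than to a direct sum, which is exactly why the proposition only claims block \emph{triangular} form with a $\ast$ in the corner, not block diagonal.

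The key point to verify is that each $\rho(x)$, $x \in F$, maps $V_{\leq 1}$ into itself (equivalently, that the flag is $F$-stable), so that $\rho$ restricts to a subrepresentation on $V_{\leq 1}$ and induces a quotient representation on $\R^n/V_{\leq 1} \cong V_{>1}$; since $F$ is finite, Maschke's theorem lets me split that quotient off and write $\rho = \rho_{\leq 1} \oplus \rho_{>1}$ in a suitable basis, while $D$ stays block upper triangular because $V_{\leq 1}$ is $D$-invariant. To see the flag is $F$-stable, fix $x \in F$ and use the relation $\rho(\phi(x)) D = D \rho(x)$, hence $D = \rho(\phi(x))^{-1} D \rho(x)$, so $D$ and $\rho(x)^{-1} D \rho(x)$ differ by conjugation by $\rho(\phi(x))$. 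Iterating, $D^k = \rho(\phi(x))^{-1} D^k \rho(x) \cdot(\text{correction})$ — more carefully, from $\rho(\phi(x)) D = D\rho(x)$ one gets $\rho(\phi(x)) p(D) = p(D)\rho(x)$ for every polynomial $p$ with no constant term; better, the cleanest route is: if $v \in V_{\leq 1}$ then $(D - \lambda I)^m v = 0$ for appropriate $\lambda, m$, and one shows $\rho(x) v$ lies in a generalized eigenspace of $D$ for the eigenvalue $\lambda$ again, because conjugating $D$ by $\rho(\phi(x))$ permutes its generalized eigenspaces without changing eigenvalues. Concretely, $D\,\rho(x) = \rho(\phi(x)) D$ gives, for any $v$, $\rho(\phi(x))^{-1} D \rho(x) v = D v$... the identity I actually want is $\rho(\phi(x))^{-1}\, D\, \rho(\phi(x))$ having the same eigenvalues as $D$, which needs $\phi(x)$ on both sides. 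The correct manipulation: since the relation holds for all elements, apply it at $x$ and note $\rho(\phi(x))^{-1} D = \rho(x) D'$ is not quite it — so the genuinely careful step is to observe that on $V_{\leq 1}$, $D$ acts with spectral radius $\leq 1$, a property stable under the linear conjugation $\rho(\phi(x)) D \rho(x)^{-1} = D$ relates the action of $D$ on $\rho(x)^{-1}V_{\leq 1}$... .

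Let me state the robust version of the argument I would write down. Define $V_{\leq 1}$ as above. For $x \in F$, the relation gives $D \rho(x) = \rho(\phi(x)) D$. I claim $\rho(x) V_{\leq 1} = V_{\leq 1}$. Take $v \in V_{\leq 1}$; then $\{D^k v : k \geq 0\}$ spans a subspace on which $D$ has spectral radius $\leq 1$. Now $D^k \rho(x) v$: using the relation once, $D\rho(x)v = \rho(\phi(x)) Dv$; but to push $D^2$ through I need the relation at $\phi(x)$, giving $D \rho(\phi(x)) = \rho(\phi^2(x)) D$, so $D^2 \rho(x) v = \rho(\phi^2(x)) D^2 v$, and inductively $D^k \rho(x) v = \rho(\phi^k(x)) D^k v$. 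Since $F$ is finite and $\|\rho(\phi^k(x))\|$ ranges over a finite set of values, $\|D^k \rho(x) v\| \leq C \|D^k v\| \to$ stays bounded polynomially, so $\rho(x) v$ generates a $D$-subspace of spectral radius $\leq 1$, whence $\rho(x) v \in V_{\leq 1}$. This proves $\rho(x) V_{\leq 1} \subseteq V_{\leq 1}$; applying the same to $x^{-1}$... but $\phi$ need not respect inverses, so instead I use that $\rho(x)$ is invertible and $\dim$ is preserved, giving equality. Hence $V_{\leq 1}$ is $F$-invariant, and also $D$-invariant by construction; Maschke gives an $F$-invariant complement $W$, and in a basis adapted to $V_{\leq 1} \oplus W$ the representation is $\rho_{\leq 1} \oplus \rho_{>1}$ with $\rho_{>1}$ the action on $W \cong \R^n/V_{\leq 1}$, which carries the eigenvalues of $D$ of modulus $>1$; $D$ is block upper triangular with respect to this basis because $D V_{\leq 1} \subseteq V_{\leq 1}$ (the off-diagonal $\ast$ appears because $W$ need not be $D$-invariant). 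Finally the diagonal blocks: $D_{\leq 1} = D|_{V_{\leq 1}}$ has all eigenvalues of modulus $\leq 1$ by definition, and $D_{>1}$ is the map induced on $\R^n/V_{\leq 1}$, whose eigenvalues are exactly those of $D$ of modulus $>1$. The main obstacle is precisely the one flagged above: establishing $F$-invariance of $V_{\leq 1}$ cleanly despite $\phi$ not being a homomorphism, which forces the dynamical/spectral-radius argument (tracking $D^k \rho(x) v = \rho(\phi^k(x)) D^k v$) rather than a naive "conjugation preserves eigenvalues" one-liner.
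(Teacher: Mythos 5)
Your final (``robust version'') argument is correct and is essentially the paper's approach: decompose $\R^n$ by the generalized eigenspaces of $D$ according to eigenvalue modulus, prove $F$-invariance of $V_{\leq 1}$ from the iterated identity $D^k\rho(x)=\rho(\phi^k(x))D^k$ together with finiteness of $F$, and split off an invariant complement using Maschke. The only organizational difference is that the paper (following the cited reference) first separates the generalized $0$-eigenspace and then decomposes the invertible quotient into $W_{\leq 1}\oplus W_{>1}$, whereas your norm-growth criterion handles all eigenvalues of modulus $\leq 1$ in a single step; the false starts in your middle paragraph are superseded by the correct final argument and can simply be deleted.
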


\begin{proof}
The proof of this proposition can be extracted from the more general proof one can find in \cite[page 545]{ddp11-1}. For the reader's convenience, we will recall those steps of the original proof that suffice for the proof of our proposition. One first shows that the generalized eigenspace of $D$ with respect to the eigenvalue 0 is an $F$--subspace of $\R^n$ and so one obtains a decomposition 
$\R^n= V_0 \oplus V_1$ such that $D$ takes up the form 
\[ D= \left(\begin{array}{cc}
D_0 & \ast \\
0 & D_1
\end{array}\right),\]with respect to a basis of $\R^n$ consisting of a basis of $V_0$ complemented with a basis of $V_1$. Note that $D_0$ only has $0$ as an eigenvalue, while $D_1$ only has non-zero eigenvalues.
Also, the representation $\rho$ decomposes as $\rho=\rho_0\oplus \rho_1$.
Then the space $\R^n/V_0\cong V_1$ together with the representation $\rho_1$ and the linear transformation 
$D_1$ is considered and it is shown that this space has a direct decomposition 
$V_1= W_{\leq 1} \oplus W_{>1}$ as $F$--spaces such that $D_1$ is of the form 
\[ D_1= \left(\begin{array}{cc}
D'_{\leq 1} & 0 \\
0 & D'_{>1}
\end{array}\right),\]
where $D'_{\leq 1}$ only has (non-zero) eigenvalues $\lambda$ of modulus $\leq1 $, while 
$D'_{>1}$ only has eigenvalues $\lambda$ of modulus $>1$. The proof now finishes by taking 
$V_{\leq 1}=V_0 \oplus  W_{\leq 1}$ and $V_{>1}= W_{>1}$, and so 
\[ D_{\leq 1}= \left(\begin{array}{cc}
D_0 & \ast \\
0 & D'_{\leq 1}
\end{array}\right)\mbox{ and } D_{\geq 1} = D'_{\geq1}.\]
\end{proof}

Let $M=\Gamma\backslash G$ be an infra--nilmanifold, whose fundamental group is the almost--Bieberbach group $\Gamma\subseteq \Aff(G)$, having  
$F$ as its  holonomy group and $\rho:F\rightarrow \GL(\lie)$ as its holonomy representation. Given a self-map 
$f:M\rightarrow M$ with affine homotopy lift $(\dd,\D)$, Proposition~\ref{semi-affconj} shows that there exists a map $\phi$ such that 
$\rho:F\ra \GL(\lie)$, $\phi$ and $\D_\ast$ satisfy the conditions of Proposition~\ref{decomprho}.
\begin{definition}
In this specific case, we will refer to the decomposition $\rho=\rho_{\leq 1}\oplus \rho_{>1}$ obtained from Proposition~\ref{decomprho} as the decomposition of $\rho$ induced by $\D$.
\end{definition}
As already indicated in the introduction, it is our aim to have a good understanding of the exact relationship between the Nielsen and the Lefschetz number of a given map $f$ on an infra-nilmanifold. From Theorem~\ref{LeeForm} it is clear that the terms $\det(I-\rho(x) \D_\ast)$ and especially their signs (in order to obtain the modulus of these terms) will play a crucial role in understanding this relationship. In the following lemma and proposition, we will therefore deduce how these signs behave.
\begin{lemma}\label{expanding}
Let $\rho:F\to \GL_n(\R)$ be a representation of a finite group $F$ and $\phi: F\to F$ be any function. Let $D$ be a linear transformation of $\R^n$. Suppose that $\rho(\phi(x))D=D\rho(x)$ for all $x\in F$. Suppose that $|\lambda|>1$ for all eigenvalues $\lambda$ of $D$. Then the following statement holds: $$\forall x \in F: \ \det(\rho(x))\det(I-D)\det(I-\rho(x)D)>0.$$ 
\end{lemma}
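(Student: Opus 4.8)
The plan is to isolate one structural fact and then let the rest be elementary bookkeeping. The fact is: under the hypotheses, every complex eigenvalue of the matrix $\rho(x)D$ has modulus strictly larger than $1$, exactly as every eigenvalue of $D$ does. Granting this, the sign claim follows quickly. For any real $n\times n$ matrix $A$ all of whose eigenvalues $\lambda$ satisfy $|\lambda|>1$, both $A$ and $I-A$ are invertible, and writing each factor $1-\lambda=-\lambda(1-\lambda^{-1})$ and multiplying over all eigenvalues gives
\[ \det(I-A)=(-1)^n\det(A)\,\det(I-A^{-1}). \]
Moreover $\det(I-A^{-1})>0$: the eigenvalues $\lambda^{-1}$ of $A^{-1}$ have modulus $<1$, so the real ones contribute positive factors $1-\lambda^{-1}$ while the non-real ones contribute factors $|1-\lambda^{-1}|^{2}$ coming from conjugate pairs. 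Hence $\operatorname{sign}\det(I-A)=(-1)^n\operatorname{sign}\det(A)$. Applying this both to $A=D$ and to $A=\rho(x)D$ (the latter being legitimate precisely because of the structural fact), and using that $\det\rho(x)=\pm1$ since $\rho(x)$ has finite order, one finds that
\[ \det(\rho(x))\det(I-D)\det(I-\rho(x)D) \]
has sign $\operatorname{sign}(\det\rho(x))\cdot(-1)^n\operatorname{sign}(\det D)\cdot(-1)^n\operatorname{sign}(\det\rho(x)\cdot\det D)=(\operatorname{sign}\det\rho(x))^{2}(\operatorname{sign}\det D)^{2}=1$, which is exactly the assertion.

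It remains to prove the structural fact, and this is the only point at which the intertwining relation $\rho(\phi(x))D=D\rho(x)$ is genuinely used; in particular one cannot simply multiply eigenvalues of $\rho(x)$ and $D$. The idea is to pass to powers. Applying the relation successively at the points $x,\phi(x),\phi^{2}(x),\dots$, an easy induction gives $D^{k}\rho(x)=\rho(\phi^{k}(x))D^{k}$ for all $k\ge1$, and hence
\[ (D\rho(x))^{k}=P_{k}\,D^{k},\qquad P_{k}:=\rho(\phi(x))\rho(\phi^{2}(x))\cdots\rho(\phi^{k}(x)). \]
Since $F$ is finite, $\rho(F)$ is a finite subgroup of $\GL_n(\R)$, so $P_{k}\in\rho(F)$ for every $k$ and in particular the norms $\|P_{k}^{-1}\|$ stay bounded by a constant $C$ independent of $k$. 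Taking inverses yields $\big((D\rho(x))^{-1}\big)^{k}=D^{-k}P_{k}^{-1}$, whence $\|\big((D\rho(x))^{-1}\big)^{k}\|\le C\,\|D^{-k}\|$, and Gelfand's spectral radius formula gives
\[ \operatorname{spr}\!\big((D\rho(x))^{-1}\big)\ \le\ \operatorname{spr}(D^{-1})\ =\ \frac{1}{\min_{i}|\lambda_{i}|}\ <\ 1, \]
using that every eigenvalue $\lambda_i$ of $D$ has modulus $>1$. Thus all eigenvalues of $(D\rho(x))^{-1}$, hence all eigenvalues of $D\rho(x)$, have modulus $>1$; and the same holds for $\rho(x)D=\rho(x)(D\rho(x))\rho(x)^{-1}$, which is conjugate to $D\rho(x)$.

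The main obstacle is precisely this eigenvalue estimate: the tempting guess that the eigenvalues of $\rho(x)D$ are just rotated eigenvalues of $D$ is false for general pairs of matrices, and what rescues it here is that the correction matrices $P_{k}$ produced by iterating the relation all lie in the finite group $\rho(F)$ and are therefore uniformly bounded, so they disappear in the limit computing the spectral radius. Everything else is routine. (One can also bypass Gelfand's formula: since $F$ is finite the sequence $\phi^{k}(x)$ is eventually periodic, so $D\rho(x)$ is conjugate to $D\rho(y)$ for some $y$ with $\phi^{p}(y)=y$; for such a $y$ the matrix $D^{p}$ commutes with each $\rho(\phi^{i}(y))$, so $(D\rho(y))^{p}$ is the product of two commuting matrices, one of finite order and one with all eigenvalues of modulus $>1$, whence its eigenvalues, and therefore those of $\rho(x)D$, have modulus $>1$.)
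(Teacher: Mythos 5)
Your proof is correct, and it reaches the conclusion by a genuinely different route than the paper. The paper leans on an external lemma (Lemma 3.1 of the cited Anosov-relation paper of Dekimpe--De Rock--Malfait) to reduce to a point $x_j$ in the eventually periodic orbit of $\phi$ where $(\rho(x_j)D)^l=D^l$ holds exactly; from that it reads off that all eigenvalues of $\rho(x_j)D$ have modulus $>1$, determines signs by counting the parity of positive real eigenvalues (with a case split on $\det\rho(x_j)=\pm1$), and finally transfers the conclusion back from $x_j$ to $x$ via the invariance of $\det(I-\rho(x_i)D)$ and $\det(\rho(x_i))$ along the orbit. You instead prove the spectral fact directly for every $x$: the identity $(D\rho(x))^k=P_kD^k$ with $P_k\in\rho(F)$ uniformly bounded, combined with Gelfand's formula applied to $(D\rho(x))^{-1}$, gives $\mathrm{spr}\bigl((D\rho(x))^{-1}\bigr)<1$ with no reduction to a periodic point and no external citation; and your sign bookkeeping via $\det(I-A)=(-1)^n\det(A)\det(I-A^{-1})$ with $\det(I-A^{-1})>0$ replaces the parity count and eliminates the case distinction entirely. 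What your approach buys is self-containedness and a cleaner, more symmetric final computation (the product of signs collapses to $(\operatorname{sign}\det\rho(x))^2(\operatorname{sign}\det D)^2=1$); what the paper's approach buys is that the exact relation $(\rho(x_j)D)^l=D^l$ from the cited lemma is a stronger structural statement that is reused elsewhere in that line of work. Your parenthetical alternative at the end (commuting $P_p$ and $D^p$ at a periodic point) is essentially a reconstruction of that cited lemma, so you have in effect also supplied the missing ingredient of the paper's own argument.
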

\begin{proof}
This proof is largely based on the proof of \cite[Theorem 3.2]{ddm05-1}. Choose an arbitrary $x \in F$. We can define a sequence $(x_i)_{i\in \N}$ 
of elements in $F$ by taking $x_1=x$ and such that $x_{i+1}= \phi(x_i)$. 
Since $F$ is finite, this sequence will become periodic from a certain point onwards. By \cite[Lemma 3.1]{ddm05-1}, we know that 
\begin{equation}\label{induct1}\forall i \in \N: \;\det(I-\rho(x_i) D) =\det(I-\rho(x_{i+1}) D).\end{equation}
Also, by the same lemma, there exists an $l\in \N$ and an element $x_j$ in our sequence such that $(\rho(x_j)D)^l=D^l$. 
As every eigenvalue of $D$ has modulus $>1$, we know that every eigenvalue of $\rho(x_j)D$ will also have modulus $>1$. 
Let us call those eigenvalues $\lambda_1,\dots ,\lambda_n$, then 
$$\det(I-\rho(x_j) D)=(1-\lambda_1)\dots(1-\lambda_n).$$
Note that the complex eigenvalues, which always come in conjugate pairs, together with the negative real eigenvalues of $\rho(x_j) D$ can only give a positive 
contribution to this product. So the sign of $\det(I-\rho(x_j)D)$ is completely determined by the parity of the number of positive real eigenvalues. 
Analogously, the sign of $\det(I-D)$ is completely determined by the parity of the number of real positive eigenvalues of $D$.

\medskip

As $\rho(x_j)$ is a real matrix of finite order, we know that $\det(\rho(x_j))$ equals $1$ or $-1$. 
If $\det(\rho(x_j))=1$, then $\det(\rho(x_j)D)=\det(D)$. A fortiori, $\det(\rho(x_j)D)$ and $\det(D)$ have the same sign (and are both non-zero).
Hence, the parity of the number of negative real eigenvalues of $\rho(x_j)D$ and $D$ is the same, and therefore also the 
parity of the number of positive real eigenvalues of both matrices is the same (since complex eigenvalues come in conjugate pairs). It follows that 
in this case $\det(I-D)$ and $\det(I-\rho(x_j)D)$ have the same sign and, hence
\begin{equation}\label{inequality1}
\det(\rho(x_j))\det(I-D)\det(I-\rho(x_j)D)=\det(I-D)\det(I-\rho(x_j)D)>0.
\end{equation}

When $\det(\rho(x_j))=-1$, we deduce in a similar way that the parity of the number of positive eigenvalues of $\rho(x_j)D$ and $D$ is different,
hence $\det(I-D)$ and $\det(I-\rho(x_j)D)$ have an opposite sign and we find 
\begin{equation}\label{inequality2}
\det(\rho(x_j))\det(I-D)\det(I-\rho(x_j)D)=-\det(I-D)\det(I-\rho(x_j)D)>0.
\end{equation}

Note that for every $i \in \N$, we have$$\det(\rho(x_{i+1}))\det(D)=\det(\rho(x_{i+1})D)=\det(D\rho(x_{i}))=\det(D)\det(\rho(x_{i})).$$Because $\det(D)\neq 0$, this means that $$\det(\rho(x_{i+1}))=\det(\rho(x_{i})).$$By using an inductive argument on this expression and on expression \eqref{induct1}, we find that $$\det(\rho(x))=\det(\rho(x_{j})) \textrm { and } \det(I-\rho(x) D)
=\det(I-\rho(x_{j}) D).$$
This, together with the two inequalities (\ref{inequality1}) and (\ref{inequality2}), proves this lemma.  
 \end{proof}
\begin{proposition}\label{Sign}
Let $\Gamma\subseteq \Aff(G)$ be an almost--Bieberbach group and let $M=\Gamma\backslash G$ be the corresponding infra--nilmanifold. 
Let $F$ be the holonomy group of $\Gamma$ and 
$\rho:F\to \GL_n(\lie)$ be the associated  holonomy representation. 
Choose an arbitrary self-map $f:M\to M$ and let $(\dd,\D)\in \aff(G)$ be an affine homotopy lift of $f$.
Let $\rho=\rho_{\leq 1}\oplus \rho_{>1}$ be the decomposition of $\rho$ induced by $\D$.  For every $x \in F$, 
the following statements hold:
\[\det(\rho_{>1}(x))=1\Rightarrow \det(I-\D_\ast)\det(I-\rho(x)\D_\ast)\geq 0\]and
\[ \det(\rho_{>1}(x))=-1\Rightarrow \det(I-\D_\ast)\det(I-\rho(x)\D_\ast)\leq 0.\]
\end{proposition}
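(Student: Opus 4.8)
The plan is to split the problem along the decomposition $\rho=\rho_{\leq 1}\oplus\rho_{>1}$ induced by $\D$ and to apply Lemma~\ref{expanding} to the expanding block only. Recall that this decomposition is what Proposition~\ref{decomprho} produces from the data $\rho$, the map $\phi$ of Proposition~\ref{semi-affconj}, and $D:=\D_\ast$; so, in a basis adapted to $\rho=\rho_{\leq 1}\oplus\rho_{>1}$, we may assume
\[\D_\ast=\left(\begin{array}{cc} D_{\leq 1} & \ast\\ 0 & D_{>1}\end{array}\right),\]
with $D_{\leq 1}$ having only eigenvalues of modulus $\leq 1$, $D_{>1}$ having only eigenvalues of modulus $>1$, and $\rho(x)$ block diagonal with blocks $\rho_{\leq 1}(x)$, $\rho_{>1}(x)$ for each $x\in F$ (the quantities in the statement are invariant under this simultaneous change of basis, so this is harmless). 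Two remarks follow at once. Multiplying out these block matrices, for every $x\in F$
\[\det(I-\rho(x)\D_\ast)=\det(I-\rho_{\leq 1}(x)D_{\leq 1})\cdot\det(I-\rho_{>1}(x)D_{>1}),\]
and similarly $\det(I-\D_\ast)=\det(I-D_{\leq 1})\cdot\det(I-D_{>1})$. And substituting the block forms into $\rho(\phi(x))\D_\ast=\D_\ast\rho(x)$ and comparing the diagonal blocks gives $\rho_{>1}(\phi(x))D_{>1}=D_{>1}\rho_{>1}(x)$ and $\rho_{\leq 1}(\phi(x))D_{\leq 1}=D_{\leq 1}\rho_{\leq 1}(x)$ for all $x\in F$.

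For the expanding block, Lemma~\ref{expanding} applies verbatim to $\rho_{>1}$, $\phi$, $D_{>1}$, and gives, for every $x\in F$,
\[\det(\rho_{>1}(x))\,\det(I-D_{>1})\,\det(I-\rho_{>1}(x)D_{>1})>0.\]
Since $\det(\rho_{>1}(x))\in\{1,-1\}$, this says exactly that $\det(I-D_{>1})\det(I-\rho_{>1}(x)D_{>1})$ is $>0$ when $\det(\rho_{>1}(x))=1$ and $<0$ when $\det(\rho_{>1}(x))=-1$.

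For the contracting block I would prove $\det(I-D_{\leq 1})\det(I-\rho_{\leq 1}(x)D_{\leq 1})\geq 0$ for all $x\in F$. First, $\det(I-D_{\leq 1})\geq 0$: the eigenvalues $\mu$ of $D_{\leq 1}$ satisfy $|\mu|\leq 1$, so the eigenvalues $1-\mu$ of $I-D_{\leq 1}$ have nonnegative real part, and for a real matrix the real such eigenvalues contribute nonnegative factors to the determinant while the non-real ones occur in conjugate pairs contributing positive factors. The subtle point is to see that $\rho_{\leq 1}(x)D_{\leq 1}$ also has all eigenvalues of modulus $\leq 1$ — this fails for an arbitrary product of a finite-order matrix with a matrix of spectral radius $\leq 1$, so the relation $\rho_{\leq 1}(\phi(x))D_{\leq 1}=D_{\leq 1}\rho_{\leq 1}(x)$ really has to be used. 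Exactly as in the proof of Lemma~\ref{expanding}, set $x_1=x$ and $x_{i+1}=\phi(x_i)$; by \cite[Lemma~3.1]{ddm05-1} one has $\det(I-\rho_{\leq 1}(x_i)D_{\leq 1})=\det(I-\rho_{\leq 1}(x_{i+1})D_{\leq 1})$ for all $i$, and there are $j,l\in\N$ with $(\rho_{\leq 1}(x_j)D_{\leq 1})^l=D_{\leq 1}^{\,l}$. The latter forces every eigenvalue of $\rho_{\leq 1}(x_j)D_{\leq 1}$ to have modulus $\leq 1$, hence $\det(I-\rho_{\leq 1}(x_j)D_{\leq 1})\geq 0$ by the same bookkeeping as above, and then iterating the former equality yields $\det(I-\rho_{\leq 1}(x)D_{\leq 1})=\det(I-\rho_{\leq 1}(x_j)D_{\leq 1})\geq 0$.

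Combining the three displays, $\det(I-\D_\ast)\det(I-\rho(x)\D_\ast)$ is the product of the nonnegative number $\det(I-D_{\leq 1})\det(I-\rho_{\leq 1}(x)D_{\leq 1})$ with $\det(I-D_{>1})\det(I-\rho_{>1}(x)D_{>1})$, and the latter has the sign of $\det(\rho_{>1}(x))$; this gives both implications. I expect the contracting block to be the real obstacle: Lemma~\ref{expanding} cannot be invoked there, and one must argue non-expansiveness of the \emph{product} $\rho_{\leq 1}(x)D_{\leq 1}$, which is where the finiteness of $F$ and the semi-conjugacy relation (through the cited Lemma~3.1) are essential.
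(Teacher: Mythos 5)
Your proof is correct and follows essentially the same route as the paper: factor $\det(I-\rho(x)\D_\ast)$ along the block decomposition, show the $\rho_{\leq 1}(x)D_{\leq 1}$ factor is always nonnegative (the paper dispatches this with ``analogously as in the proof of Lemma~\ref{expanding}'' and a citation, whereas you carry out the sequence argument explicitly), and apply Lemma~\ref{expanding} to the expanding block to read off the sign from $\det(\rho_{>1}(x))$. The only cosmetic difference is that you combine the factorizations directly, while the paper multiplies the two derived inequalities together with the conclusion of Lemma~\ref{expanding}; the content is identical.
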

\begin{proof}
From Propositions~\ref{semi-affconj} and \ref{decomprho}, we know that there exists a function $\phi:F\to F$, such that 
$\rho(\phi(x)) \D_\ast = \D_\ast \rho(x)$, for all $x\in F$ and there exists a decomposition of $\lie$ into two subspaces
leading to the decomposition $\rho=\rho_{\leq 1}\oplus \rho_{> 1}$, while $\D_\ast$ can be written in block diagonal form 
$$\left(\begin{array}{cc}
D_{\leq 1} & \ast \\
0 & D_{>1}
\end{array}\right),$$
where $D_{\leq 1}$ and $D_{>1}$ only have eigenvalues of modulus $\leq 1$ and $>1$, respectively. 

For every $x\in F$ we have that
$$\det(I-\rho(x)\D_\ast)=\det(I-\rho_{>1}(x)D_{>1})\det(I-\rho_{\leq 1}(x)D_{\leq 1}).$$
Analogously as in the proof of Lemma~\ref{expanding}, we can show that $\rho_{\leq 1}(x)D_{\leq 1}$ only has eigenvalues 
of modulus $\leq 1$, from which it follows that $\det(I-\rho_{\leq 1}(x)D_{\leq 1})\geq 0$ (see also \cite[Theorem 4.6]{ddm05-1})
 and so the second factor in the equality above does not influence the sign of $\det(I-\rho(x)\D_\ast)$. 

\medskip

Therefore, for every $x\in F$, the following holds: $$\det(I-\rho(x)\D_\ast)\det(I-\rho_{>1}(x)D_{>1})\geq 0.$$
In particular, this inequality is true for the identity element of $F$:
$$\det(I-\D_\ast)\det(I-D_{>1})\geq 0.$$
By using both inequalities and because of Lemma \ref{expanding} (for $D_{>1}$ and $\rho_{> 1}$), 
we deduce that 
$$\det(I-\rho(x)\D_\ast)\det(I-\D_\ast)\det(\rho_{>1}(x))\geq 0,$$
which concludes this proposition.
\end{proof}

\section{Nielsen numbers and the positive part of a map}

In this section we will prove the main results of this paper. In the next theorem, we show how certain maps on a given infra-nilmanifold give rise to a specific 2-fold covering of that infra-nilmanifold, in such a way that the map under consideration  lifts to that covering. 
\begin{theorem}\label{positivepart}
Let $M=\Gamma\backslash G$ be an infra-nilmanifold modeled on a connected, simply connected nilpotent Lie group $G$, with
fundamental group the almost-Bieberbach group $\Gamma\subseteq \Aff(G)$. Let $p:\Gamma\ra F$ denote the 
projection of $\Gamma$ onto its holonomy group and denote the holonomy representation by
$\rho:F\to \GL_n(\lie)$. 
Choose an arbitrary self-map $f:M\to M$ and let $(\dd,\D)\in \aff(G)$ be an affine homotopy lift of $f$. Let 
$\rho=\rho_{\leq 1} \oplus \rho_{>1}$ be the decomposition of $\rho$ induced by $\D$.
Then 
\[ \Gamma_+ = \{ \gamma \in \Gamma\;|\; \det(\rho_{>1}(p(\gamma))) =1 \}\]
is a normal subgroup of $\Gamma$ of index 1 or 2. It follows that 
$\Gamma_+$ is also a Bieberbach group, that the corresponding infra-nilmanifold $M_+= \Gamma_+\backslash G$ is either 
equal to $M$ or a 2-fold covering of $M$. In the later case, the map $f$ lifts to a map 
$f_+:M_+\ra M_+$ which has the same affine homotopy lift $(\dd,\D)$ as $f$.
\end{theorem}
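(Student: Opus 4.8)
The plan is to establish the four assertions in turn; the first three are essentially formal, and only the last --- that $(\dd,\D)$ descends to $M_+$ --- needs a computation. For the first, observe that $\gamma\mapsto\det\bigl(\rho_{>1}(p(\gamma))\bigr)$ is the composite of the homomorphism $p:\Gamma\to F$, the representation $\rho_{>1}:F\to\GL_{n_{>1}}(\R)$ (a genuine group homomorphism, by Proposition~\ref{decomprho}), and $\det:\GL_{n_{>1}}(\R)\to\R^\ast$, hence is itself a homomorphism. As $F$ is finite, its image lies among the roots of unity of $\R^\ast$, i.e.\ in $\{1,-1\}$, so we obtain a homomorphism $\chi:\Gamma\to\{1,-1\}$ with $\Gamma_+=\ker\chi$. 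Therefore $\Gamma_+$ is normal in $\Gamma$ and, as the kernel of a map into a group of order $2$, has index $1$ or $2$.

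For the second and third assertions, $\Gamma_+$ is torsion-free since $\Gamma$ is; $\Gamma_+\cap G$ has index at most $[\Gamma:\Gamma_+]\leq 2$ in the uniform discrete subgroup $\Gamma\cap G$ of $G$, hence is again uniform and discrete in $G$; and $p(\Gamma_+)\subseteq F$ is finite. So $\Gamma_+\subseteq\Aff(G)$ is a torsion-free almost-crystallographic group, i.e.\ an almost-Bieberbach group modeled on $G$, and $M_+=\Gamma_+\backslash G$ is an infra-nilmanifold. The projection $\Gamma_+\backslash G\to\Gamma\backslash G$ is a covering of degree $[\Gamma:\Gamma_+]\in\{1,2\}$, so $M_+$ is either $M$ itself or a $2$-fold cover of $M$.

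For the last assertion, replace $f$ by the homotopic map $\overline{(\dd,\D)}$, so that we may assume $f=\overline{(\dd,\D)}$. Since $(\dd,\D)$ is an affine homotopy lift of $f$, there is a homomorphism $\varphi:\Gamma\to\Gamma$ with $\varphi(\gamma)(\dd,\D)=(\dd,\D)\gamma$ in $\aff(G)$ for all $\gamma\in\Gamma$. Exactly as $(\dd,\D)$ descends to the self-map $\overline{(\dd,\D)}$ of $\Gamma\backslash G$ because $\varphi(\Gamma)\subseteq\Gamma$, it descends to a well-defined self-map $f_+$ of $\Gamma_+\backslash G$ as soon as $\varphi(\Gamma_+)\subseteq\Gamma_+$; and then $f_+=\overline{(\dd,\D)}:M_+\to M_+$ is a lift of $f$ along the covering $M_+\to M$ and plainly has $(\dd,\D)$ as an affine homotopy lift. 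So everything reduces to checking $\varphi(\Gamma_+)\subseteq\Gamma_+$.

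To do so, fix $\gamma\in\Gamma$, write $\gamma=(g,x)$ and $\varphi(\gamma)=(g',x')$ in $\Aff(G)$, so $x=p(\gamma)$ and $x'=p(\varphi(\gamma))$, and compare the $\Endo(G)$-components of $\varphi(\gamma)(\dd,\D)=(\dd,\D)\gamma$: this yields $x'\circ\D=\D\circ x$, hence, on passing to differentials, $\rho(x')\D_\ast=\D_\ast\rho(x)$. In the basis adapted to $\rho=\rho_{\leq 1}\oplus\rho_{>1}$, the matrices $\rho(x)$ and $\rho(x')$ are block diagonal while $\D_\ast$ is block upper triangular with diagonal blocks $D_{\leq 1}$ and $D_{>1}$; comparing the lower-right blocks of $\rho(x')\D_\ast=\D_\ast\rho(x)$ gives $\rho_{>1}(x')\,D_{>1}=D_{>1}\,\rho_{>1}(x)$. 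Since every eigenvalue of $D_{>1}$ has modulus $>1$, the matrix $D_{>1}$ is invertible, so $\rho_{>1}(x')$ and $\rho_{>1}(x)$ are conjugate and in particular have equal determinant; thus $\gamma\in\Gamma_+$ forces $\varphi(\gamma)\in\Gamma_+$, as needed. I expect this block-matrix step --- equivalently, the invariance of $\det\rho_{>1}\circ p$ under $\varphi$ --- to be the only place where anything has to be proved rather than merely unwound, its crux being the invertibility of $D_{>1}$, which is exactly what the decomposition ``induced by $\D$'' supplies.
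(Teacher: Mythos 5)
Your proposal is correct and follows essentially the same route as the paper: identify $\Gamma_+$ as the kernel of the sign homomorphism $\gamma\mapsto\det(\rho_{>1}(p(\gamma)))$, note the almost-Bieberbach property passes to the finite-index subgroup, and reduce the lifting claim to $\varphi(\Gamma_+)\subseteq\Gamma_+$, which you verify by the same block computation $\rho_{>1}(x')D_{>1}=D_{>1}\rho_{>1}(x)$ and the invertibility of $D_{>1}$. The only (harmless) cosmetic difference is that you replace $f$ by the homotopic affine-induced map before lifting, whereas the paper lifts $f$ itself via $\tilde f$; since $\Gamma_+$ is normal and $f$ induces the same $\varphi$ up to conjugation, the lifting criterion is unaffected.
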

Remark: when $\rho=\rho_{\leq 1}$ (so when $\D_\ast$ has no eigenvalues of modulus $>1$), we will take $\Gamma_+=\Gamma$.
\begin{proof} We may assume that $\D_\ast$ has at least one eigenvalue of modulus $>1$, otherwise the 
theorem is trivially true. 
Note that $\Gamma_+$ is the kernel of the morphism
\[ \Gamma \ra \{-1,+1\}: \gamma \mapsto \det( \rho_{>1} (p(\gamma)))\]
and hence $\Gamma_+$ is either equal to $\Gamma$ (in case $ \det( \rho_{>1} (p(\gamma)))=1$ for all $\gamma$) or 
$[\Gamma:\Gamma_+]=2$ (in case $ \exists \gamma \in \Gamma: \det( \rho_{>1} (p(\gamma)))=-1$). In any of these two cases,
$\Gamma_+$ is still an almost--Bieberbach group and when $[\Gamma:\Gamma_+]=2$, we have that $M_+$ is a 
2-fold covering of $M$.

There is a lift $\tilde{f}: G \ra G$ of $f$ and a morphism $\varphi:F \ra F$ such that 
\[ \forall \gamma \in \Gamma:\; \varphi(\gamma)  \tilde{f}  = \tilde{f}  \gamma\]
and also 
\begin{equation}\label{voorsubiet}
\forall \gamma \in \Gamma:\; \varphi(\gamma)  (\dd, \D)   = (\dd,\D)  \gamma.
\end{equation}

We need to show that $\tilde{f}$ also induces a map on $M_+=\Gamma_+\backslash G$. 
For this, we need to prove that $\varphi(\Gamma_+) \subseteq \Gamma_+$.
As before, we can assume that 
\[ \D_\ast= \left( \begin{array}{cc}
D_{\leq 1} & \ast \\
0 & D_{>1}
\end{array}\right).\]
Let $\gamma=(a,\A)\in \Gamma_+$ and assume that $\varphi(\gamma)= (b,\B)$. As $\gamma \in \Gamma_+$, we have that 
$\det(\rho_{>1}(\A)) = 1$. Equation (\ref{voorsubiet}) implies that 
\[ \B \D = \D \A \Rightarrow \B_\ast \D_\ast = \D_\ast \A_\ast \Rightarrow 
\rho_{>1}(\B) D_{>1} = D_{>1} \rho_{>1}(\A) \]
As $\det(D_{>1})\neq 0$, this last equality implies that $\det(\rho_{>1}(\B))=\det( \rho_{>1}(\A))$, from which it follows
that $\varphi(\gamma)=(b,\B)\in \Gamma_+$.
\end{proof}

\begin{definition}
With the notations from Theorem~\ref{positivepart}, we will call $\Gamma_+$ the positive part of $\Gamma$ with respect to 
$f$. We will say that $f_+$ is the positive part of $f$.
\end{definition}

Note that for any $k\in \N$ we can take $(\dd,\D)^k$ as an affine homotopy lift of $f^k$.
Therefore, the decomposition of $\rho$ into a direct sum $\rho=\rho_{\leq 1} \oplus \rho_{>1}$ is independent 
of $k$.  It follows that the positive part $\Gamma_+$ of $\Gamma$ with respect to $f$ 
is the same as the positive part of $\Gamma$ with respect to $f^k$ for any $k\in \N$. We also have that 
$(f_+)^k=(f^k)_+$.

\medskip

The proof of the following lemma can be left to the reader.
\begin{lemma}\label{LemmaSign}
Let $D\in \R^{n\times n}$ be an arbitrary matrix. Let $p$ denote the number of real positive eigenvalues of $D$ which are 
strictly bigger than 1 and let $n$ denote the number of negative real eigenvalues of $D$ which are strictly smaller than $-1$, 
then for all $k\in \N$:
$$\left\{\begin{array}{cc}
(-1)^p \det(I-D^k)\geq 0 & \textrm{ if } $k$ \textrm{ is odd}\\
(-1)^{p+n} \det(I-D^k)\geq 0 & \textrm{ if } $k$ \textrm{ is even}.\\
\end{array}\right.$$
It follows that one of the following holds$$\forall k\in \N: \det(I-D^k)\det(I-D^{k+1})\geq 0$$or $$\forall k\in \N: \det(I-D^k)\det(I-D^{k+1})\leq 0.$$
\end{lemma}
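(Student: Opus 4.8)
The plan is to argue directly from the eigenvalue factorization
\[ \det(I-D^k)=\prod_{\lambda}(1-\lambda^k), \]
where the product runs over all eigenvalues $\lambda$ of $D$, counted with algebraic multiplicity (over $\mathbb{C}$), and to determine the sign of each factor as a function of the parity of $k$. The two quantities $p$ and $n$ in the statement are understood to be counted with the same multiplicity.

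First I would sort the eigenvalues according to the behaviour of $1-\lambda^k$. The non-real eigenvalues come in conjugate pairs $\lambda,\bar\lambda$, and such a pair contributes $(1-\lambda^k)(1-\bar\lambda^k)=|1-\lambda^k|^2\ge 0$, so it never affects the sign. A real eigenvalue with $|\lambda|<1$ gives $1-\lambda^k>0$ for every $k\ge 1$; the eigenvalue $\lambda=1$ gives the factor $0$, and the eigenvalue $\lambda=-1$ gives the factor $0$ for even $k$ and $2$ for odd $k$ --- whenever one of these zero factors is present the whole determinant vanishes and both inequalities asserted in the lemma hold trivially, so these boundary cases can be set aside. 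The factors that can genuinely be negative come only from the real eigenvalues with $|\lambda|>1$: if $\lambda>1$ then $1-\lambda^k<0$ for all $k$, so over the $p$ such eigenvalues one gets the sign $(-1)^p$; if $\lambda<-1$ then $1-\lambda^k>0$ for odd $k$ but $1-\lambda^k<0$ for even $k$, so over the $n$ such eigenvalues one gets the sign $1$ when $k$ is odd and $(-1)^n$ when $k$ is even.

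Collecting these observations, for odd $k$ one obtains $\det(I-D^k)=(-1)^p S_k$ for some nonnegative real number $S_k$, hence $(-1)^p\det(I-D^k)\ge 0$; for even $k$ one obtains $\det(I-D^k)=(-1)^{p+n} S_k$ for some nonnegative $S_k$, hence $(-1)^{p+n}\det(I-D^k)\ge 0$. This is exactly the displayed dichotomy in the lemma.

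For the concluding ``it follows'' statement I would note that of two consecutive integers $k$ and $k+1$, one is even and one is odd, so by the inequalities just established the product $\det(I-D^k)\det(I-D^{k+1})$ has, whenever it is nonzero, the sign $(-1)^p\cdot(-1)^{p+n}=(-1)^n$, which is independent of $k$. Hence $\det(I-D^k)\det(I-D^{k+1})\ge 0$ for all $k$ if $n$ is even, and $\det(I-D^k)\det(I-D^{k+1})\le 0$ for all $k$ if $n$ is odd. I do not anticipate any serious obstacle here; the only point needing a little care is the bookkeeping of the eigenvalues $\pm 1$, but since these force the determinant to be zero they only make the inequalities easier to satisfy.
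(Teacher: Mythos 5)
Your argument is correct and is precisely the straightforward eigenvalue bookkeeping the authors have in mind when they leave this proof to the reader: the paper itself gives no proof of Lemma~\ref{LemmaSign}, and your case analysis (conjugate pairs nonnegative, $|\lambda|<1$ positive, $\lambda=\pm1$ forcing the determinant to vanish, the signs $(-1)^p$ and $(-1)^n$ from the real eigenvalues outside the unit disc) is complete. The derivation of the final dichotomy from the parity of $n$ is also correct.
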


We are now ready to show the exact relationship between the Nielsen number of (any power of) a map $f$ and the Lefschetz number of  (any power of) that map $f$ and its positive part $f_+$.

\begin{theorem}\label{NielsenLefschetz}
Let $G$ be a connected, simply connected, nilpotent Lie group, $\Gamma\subseteq \Aff(G)$ an almost--Bieberbach group, $M=\Gamma\backslash G$ the corresponding  infra-nilmanifold 
and $f:M\to M$ a map with affine homotopy lift $(\dd,D)$. 
Let $p$ denote the number of positive real eigenvalues of $\D_\ast$ which are strictly bigger than $1$ and 
let $n$ denote the number of negative real eigenvalues of $\D_\ast$ which are strictly smaller than $-1$. 
Then we can express $N(f^k)$, for $k\in \N$, in terms of $L(f^k)$ and $L(f_+^k)$, where $f_+$ is the positive part of $f$ as follows:

\renewcommand{\arraystretch}{1.7}
\begin{center}\begin{tabular}{ |c|c|c| }
\cline{2-3}
\multicolumn{1}{c}{}
 &  \multicolumn{1}{|c|}{$\Gamma=\Gamma_+$}
 & \multicolumn{1}{|c|}{$\Gamma\neq \Gamma_+$} \\
\cline{1-3}
$k$ odd & $N(f^k)=(-1)^p L(f^k)$ & $N(f^k)=(-1)^{p} (L(f_+^k)-L(f^k))$ \\[1ex]
\cline{1-3}
$k$ even & $N(f^k)=(-1)^{p+n} L(f^k)$ & $N(f^k)=(-1)^{p+n} (L(f_+^k)-L(f^k))$ \\[1ex]
\cline{1-3}
\end{tabular}
 \end{center}
\end{theorem}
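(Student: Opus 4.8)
The plan is to compute all three quantities $N(f^k)$, $L(f^k)$, $L(f_+^k)$ through the Lee--Lee formula of Theorem~\ref{LeeForm}. Since $(\dd,\D)^k$ is an affine homotopy lift of $f^k$, and since $f_+$ has affine homotopy lift $(\dd,\D)$ so that $f_+^k=(f^k)_+$ has affine homotopy lift $(\dd,\D)^k$, Theorem~\ref{LeeForm} gives, using $\D_\ast^k=(\D^k)_\ast$,
\[N(f^k)=\frac{1}{\#F}\sum_{x\in F}\bigl|\det(I-\rho(x)\D_\ast^k)\bigr|,\qquad L(f^k)=\frac{1}{\#F}\sum_{x\in F}\det(I-\rho(x)\D_\ast^k),\]
and $L(f_+^k)=\frac{1}{\#F_+}\sum_{x\in F_+}\det(I-\rho(x)\D_\ast^k)$, where $F_+$ is the holonomy group of $\Gamma_+$. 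First I would record that $\Gamma_+=p^{-1}(F_+)$ with $F_+=\{x\in F\mid\det(\rho_{>1}(x))=1\}=\ker(\det\circ\rho_{>1}\colon F\to\{\pm1\})$, so that $[\Gamma:\Gamma_+]=[F:F_+]$ and $\#F_+=\#F/[\Gamma:\Gamma_+]$, which is $\#F$ when $\Gamma=\Gamma_+$ and $\#F/2$ otherwise. Thus the whole statement reduces to pinning down, term by term, the sign of $\det(I-\rho(x)\D_\ast^k)$ relative to $(-1)^p$ (for $k$ odd) or $(-1)^{p+n}$ (for $k$ even).

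For the sign analysis I would use the block form $\D_\ast=\left(\begin{smallmatrix}D_{\leq1}&\ast\\0&D_{>1}\end{smallmatrix}\right)$ coming from the decomposition induced by $\D$ (which, as noted after Theorem~\ref{positivepart}, is the same for every power of $f$), so that
\[\det(I-\rho(x)\D_\ast^k)=\det(I-\rho_{\leq1}(x)D_{\leq1}^k)\cdot\det(I-\rho_{>1}(x)D_{>1}^k).\]
Iterating Proposition~\ref{semi-affconj} gives $\rho(\phi^k(x))\D_\ast^k=\D_\ast^k\rho(x)$, hence $\rho_{\leq1}(\phi^k(x))D_{\leq1}^k=D_{\leq1}^k\rho_{\leq1}(x)$ and $\rho_{>1}(\phi^k(x))D_{>1}^k=D_{>1}^k\rho_{>1}(x)$. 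Running the argument from the proof of Lemma~\ref{expanding} with $\phi^k$ and $D_{\leq1}^k$ in place of $\phi$ and $D$ shows that $\rho_{\leq1}(x)D_{\leq1}^k$ has all eigenvalues of modulus $\leq1$, whence $\det(I-\rho_{\leq1}(x)D_{\leq1}^k)\geq0$; this is what absorbs the possible vanishing of these determinants, so no degenerate subcase needs separate treatment. For the other factor, Lemma~\ref{expanding} applies directly to $\rho_{>1}$, $\phi^k$, $D_{>1}^k$, giving $\det(\rho_{>1}(x))\det(I-D_{>1}^k)\det(I-\rho_{>1}(x)D_{>1}^k)>0$ for all $x$, while Lemma~\ref{LemmaSign} applied to $D_{>1}$ (whose real eigenvalues larger than $1$, resp.\ smaller than $-1$, are exactly those of $\D_\ast$, hence counted by $p$, resp.\ $n$) gives $\operatorname{sign}\det(I-D_{>1}^k)=(-1)^p$ for $k$ odd and $(-1)^{p+n}$ for $k$ even. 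Writing $\epsilon_k$ for this sign, I obtain $\bigl|\det(I-\rho(x)\D_\ast^k)\bigr|=\epsilon_k\det(\rho_{>1}(x))\det(I-\rho(x)\D_\ast^k)$ for every $x\in F$.

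Finally I would sum this identity over $F$. When $\Gamma=\Gamma_+$ every $\det(\rho_{>1}(x))$ equals $1$, so $\#F\cdot N(f^k)=\epsilon_k\sum_{x\in F}\det(I-\rho(x)\D_\ast^k)=\epsilon_k\,\#F\,L(f^k)$, which is the left column. When $\Gamma\neq\Gamma_+$, I split $F$ into $F_+$ and $F\setminus F_+$ according to whether $\det(\rho_{>1}(x))$ equals $+1$ or $-1$, and use $\sum_{x\in F_+}\det(I-\rho(x)\D_\ast^k)=\#F_+\,L(f_+^k)=\tfrac{\#F}{2}L(f_+^k)$ together with $\sum_{x\in F}\det(I-\rho(x)\D_\ast^k)=\#F\,L(f^k)$ to get
\[\#F\cdot N(f^k)=\epsilon_k\Bigl(2\sum_{x\in F_+}\det(I-\rho(x)\D_\ast^k)-\sum_{x\in F}\det(I-\rho(x)\D_\ast^k)\Bigr)=\epsilon_k\,\#F\,\bigl(L(f_+^k)-L(f^k)\bigr),\]
which is the right column. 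The step I expect to be the most delicate is proving $\det(I-\rho_{\leq1}(x)D_{\leq1}^k)\geq0$ for \emph{every} $x\in F$, not merely for a convenient point of its $\phi^k$-orbit: this requires combining the eventual periodicity of $x,\phi^k(x),\phi^{2k}(x),\dots$ with the constancy of $\det(I-\rho_{\leq1}(\,\cdot\,)D_{\leq1}^k)$ along that orbit, exactly as in \cite[Lemma~3.1]{ddm05-1}. Everything else is bookkeeping with the formulas of Theorem~\ref{LeeForm}, Lemma~\ref{expanding} and Lemma~\ref{LemmaSign}.
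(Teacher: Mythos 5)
Your proposal is correct and follows essentially the same route as the paper: the Lee--Lee formulas, a term-by-term sign determination via the $\rho_{\leq 1}\oplus\rho_{>1}$ decomposition together with Lemma~\ref{expanding} and Lemma~\ref{LemmaSign}, and then splitting the sum over $F_+$ and $F\setminus F_+$. The only cosmetic difference is that you re-derive the content of Proposition~\ref{Sign} for the powers $\D_\ast^k$ inline (and apply Lemma~\ref{LemmaSign} to $D_{>1}$ rather than to $\D_\ast$, which gives the same signs since $D_{\leq 1}$ contributes no eigenvalues outside the unit disc), whereas the paper simply invokes Proposition~\ref{Sign} applied to $f^k$.
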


\begin{proof}
Theorem \ref{LeeForm} gave us the following formulas:
\begin{equation}\label{LefschetzFormula} L(f^k)=\frac{1}{\# F}\sum_{\A \in F}\det(I-\A_\ast\D_\ast^k)\end{equation}
and
\begin{equation}\label{NielsenFormula} N(f^k)=\frac{1}{\# F}\sum_{\A \in F}|\det(I-\A_\ast\D_\ast^k)|.\end{equation}
Due to Proposition~\ref{Sign} and Theorem~\ref{positivepart}, we know that all elements of the form $\det(I-\A_\ast\D_\ast^k)$ have the same sign as $\det(I-\D_\ast^k)$ if and only if $\Gamma=\Gamma_+$. If $\Gamma\neq \Gamma_+$, on the other hand, then only half of these elements will have the same sign, since $\Gamma_+$ is an index-two-subgroup of $\Gamma$.

First, suppose $\Gamma=\Gamma_+$ and $k$ is odd. By using Lemma \ref{LemmaSign}, we find that $$|\det(I-\D_\ast^k)|=(-1)^p\det(I-\D_\ast^k)\geq 0.$$Since all terms in equation (\ref{LefschetzFormula}) have the same sign, we can replace the absolute values in equation (\ref{NielsenFormula}) by multiplying with $(-1)^p$. Hence, we get$$N(f^k)=(-1)^p L(f^k).$$If $k$ is even, a similar argument shows that $$N(f^k)=(-1)^{p+n} L(f^k).$$

Now, suppose $\Gamma\neq\Gamma_+$ and $k$ is odd. Let us denote the holonomy group of $\Gamma_+$ by $F_+$. Note that $[F:F_+]=2$. 
In an obvious way, we can rewrite equation (\ref{NielsenFormula}) as follows$$ N(f^k)=\frac{1}{\# F}\left(\sum_{\A \in F_+}|\det(I-\A_\ast\D_\ast^k)|+\sum_{\A \in F\setminus F_+}|\det(I-\A_\ast\D_\ast^k)|\right).$$
By using some of the previous arguments, this gives us$$ N(f^k)=\frac{(-1)^p}{\# F}\left(\sum_{\A \in F_+}\det(I-\A_\ast\D_\ast^k)-\sum_{\A \in F\setminus F_+}\det(I-\A_\ast\D_\ast^k)\right).$$Finally, this can be rewritten as$$ N(f^k)=(-1)^p\frac{1}{\# F}\left(-\sum_{\A \in F}\det(I-\A_\ast\D_\ast^k)\right)+(-1)^p\frac{2}{\# F}\left(\sum_{\A \in F_+}\det(I-\A_\ast\D_\ast^k)\right).$$
Since $[F:F_+]=2$, we have
$$N(f^k)=(-1)^{p}(-L(f^k)+L(f_+^k)).$$
If $k$ is even, we can deduce the following formula in a similar manner:$$N(f^k)=(-1)^{p+n} (L(f_+^k)-L(f^k)).$$
\end{proof}

Finally, we can use the results above to describe the Nielsen zeta function of $f$ in terms of the 
Lefschetz zeta functions of $f$ and $f_+$.

\begin{theorem}\label{Nielsenzeta}
Let $G$ be a connected, simply connected, nilpotent Lie group, $\Gamma\subseteq \Aff(G)$ an almost--Bieberbach group, $M=\Gamma\backslash G$ the corresponding infra-nilmanifold 
and $f:M\to M$ a map with affine homotopy lift $(\dd,D)$. 
Let $p$ denote the number of positive real eigenvalues of $\D_\ast$ which are strictly bigger than $1$ and 
let $n$ denote the number of negative real eigenvalues of $\D_\ast$ which are strictly smaller than $-1$. Then $N_f(z)$ can be expressed in terms of $L_f(z)$ and $L_{f_+}(z)$ by the corresponding entry in the following table:
\renewcommand{\arraystretch}{2.4}
\begin{center}\begin{tabular}{ |c|c|c| }
\cline{2-3}
\multicolumn{1}{l}{ }
 &  \multicolumn{1}{|c|}{$\Gamma=\Gamma_+$}
 & \multicolumn{1}{|c|}{$\Gamma\neq \Gamma_+$} \\
\cline{1-3}
$p$ even, $n$ even & $N_f(z)= L_f(z) $& $N_f(z)= \ds \frac{L_{f_+}(z)}{L_{f}(z)}$ \\[1ex]
\cline{1-3}
$p$ even, $n$ odd &$N_f(z)= \ds \frac{1}{L_{f}(-z)} $ & $N_f(z)= \ds \frac{L_{f}(-z)}{L_{f_+}(-z)}$  \\[1ex]
\cline{1-3}
$p$ odd, $n$ even & $N_f(z)= \ds \frac{1}{L_{f}(z)} $& $N_f(z)= \ds \frac{L_{f}(z)}{L_{f_+}(z)}$ \\[1ex]
\cline{1-3}
$p$ odd, $n$ odd & $N_f(z)= L_f(-z) $& $N_f(z)= \ds \frac{L_{f_+}(-z)}{L_{f}(-z)}$\\[1ex]
\cline{1-3}
\end{tabular}
 \end{center}
\end{theorem}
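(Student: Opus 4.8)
The plan is to substitute the case-by-case formulas for $N(f^k)$ supplied by Theorem~\ref{NielsenLefschetz} directly into the defining series
\[ N_f(z)=\exp\left( \sum_{k=1}^{+\infty}\frac{N(f^k)}{k}z^k\right) \]
and to recognise the outcome as a Lefschetz zeta function, or a quotient of two of them, evaluated at $z$ or at $-z$. All the manipulations below are identities of formal power series, so no analytic subtlety intervenes; that these series actually converge near $0$ is known but plays no role.

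First I would merge the four rows of the table in Theorem~\ref{NielsenLefschetz} into a single formula valid for all $k\in\N$. Put
\[ C_k=\begin{cases} L(f^k) & \text{if } \Gamma=\Gamma_+,\\ L(f_+^k)-L(f^k) & \text{if } \Gamma\neq\Gamma_+. \end{cases} \]
Then Theorem~\ref{NielsenLefschetz} reads $N(f^k)=(-1)^{p}C_k$ for odd $k$ and $N(f^k)=(-1)^{p+n}C_k$ for even $k$. Since $(-1)^{nk}$ equals $(-1)^n$ for odd $k$ and $1$ for even $k$, both cases are captured by the single equation
\[ N(f^k)=(-1)^{p+n}\,(-1)^{nk}\,C_k\qquad(k\in\N). \]

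Next I would insert this into the exponential, obtaining
\[ \sum_{k=1}^{+\infty}\frac{N(f^k)}{k}z^k=(-1)^{p+n}\sum_{k=1}^{+\infty}\frac{C_k}{k}\bigl((-1)^{n}z\bigr)^k, \]
so that $N_f(z)$ equals $\exp\!\bigl(\sum_{k}\frac{C_k}{k}((-1)^n z)^k\bigr)$ when $p+n$ is even, and its reciprocal when $p+n$ is odd. It remains to identify the inner exponential $\exp\!\bigl(\sum_{k}\frac{C_k}{k}w^k\bigr)$ with $w=(-1)^n z$. When $\Gamma=\Gamma_+$ this is $L_f(w)$ by definition. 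When $\Gamma\neq\Gamma_+$, splitting the sum and using $\exp(a-b)=\exp(a)/\exp(b)$ termwise gives $\exp\!\bigl(\sum_{k}\frac{L(f_+^k)-L(f^k)}{k}w^k\bigr)=L_{f_+}(w)/L_f(w)$; here one uses $(f_+)^k=(f^k)_+$ so that $L(f_+^k)$ is the Lefschetz number of the $k$-th iterate of $f_+$. Substituting $w=z$ when $n$ is even and $w=-z$ when $n$ is odd, and taking the reciprocal exactly when $p+n$ is odd, reproduces the eight entries of the table after a one-line simplification in each case. This also immediately yields the rationality of $N_f(z)$, since $L_f(z)$ and $L_{f_+}(z)$ are rational by Smale's Theorem~\ref{Smale}.

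I do not expect a genuine obstacle here: once Theorem~\ref{NielsenLefschetz} is in hand, the argument is bookkeeping. The only steps that need a little care are the uniformisation — checking that the sign $(-1)^{nk}$ correctly interpolates between the $(-1)^p$ of odd powers and the $(-1)^{p+n}$ of even powers — and the routine verification that each of the eight combinations (parity of $p$, parity of $n$, and $\Gamma=\Gamma_+$ versus $\Gamma\neq\Gamma_+$) matches the corresponding cell of the table.
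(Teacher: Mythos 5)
Your proposal is correct and follows essentially the same route as the paper: substitute the formulas of Theorem~\ref{NielsenLefschetz} into the defining exponential series and identify the result as $L_f$ or $L_{f_+}/L_f$ evaluated at $\pm z$, raised to the power $\pm 1$. The only difference is organisational --- you unify the odd/even cases via the factor $(-1)^{nk}$ while the paper treats the parity of $n$ case by case --- and all eight entries of the table check out.
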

\begin{proof}
Let us first consider the case where $n$ is even. By Theorem \ref{NielsenLefschetz}, we find, $\forall k\in \N$, $$N(f^k)=(-1)^p L(f^k) \textrm{ or } N(f^k)=(-1)^p (L(f_+^k)-L(f^k)),$$depending on whether $\Gamma$ equals $\Gamma_+$ or not. It is then straightforward to see that$$N_f(z)=L_f(z)^{(-1)^p} \textrm{ or } N_f(z)=\left(\frac{L_{f_+}(z)}{L_f(z)}\right)^{(-1)^p}.$$Now, suppose $n$ is odd and $\Gamma=\Gamma_+$. Then $$N(f^k)=\left\{\begin{array}{cc}
(-1)^p L(f^k) & \textrm{ if } $k$ \textrm{ is odd}\\
-(-1)^{p} L(f^k) & \textrm{ if } $k$ \textrm{ is even}.\\
\end{array}\right.$$Therefore, $$N_f(z)=\exp\left(-(-1)^{p}\sum_{k=1}^{\infty}L(f^k)\frac{(-z)^k}{k}\right),$$which means that $$N_f(z)=\left(\frac{1}{L_f(-z)}\right)^{(-1)^p}.$$When $\Gamma\neq \Gamma_+$, a similar argument gives us the two remaining expressions.
\end{proof}

As an immediate consequence of the above theorem, we can conclude that the Nielsen zeta 
function for maps on infra-nilmanifolds is indeed rational.

\begin{corrolary}\label{mainresult}
Let $G$ be a connected, simply connected, nilpotent Lie group. Let $M$ be an infra-nilmanifold modeled on $G$. Choose an arbitrary continuous self-map $f:M\to M$. Let $N_f(z)$ be the Nielsen zeta function of $f$, then $N_f(z)$ is a rational function.
\end{corrolary}
\begin{proof}
This follows easily from Theorem \ref{Smale} and Theorem \ref{Nielsenzeta}.
\end{proof}

\begin{remark}
The class of infra-solvmanifolds of type $(R)$ is a class of manifolds which contains the class of infra-nilmanifolds and which still shares a lot of the good (algebraic) properties of the class of infra-nilmanifolds (see \cite{hlp10-1,ll09-1}). Although we formulated the theory in terms of infra-nilmanifolds in this paper, the reader who is familiar with the the class of infra-solvmanifolds of type $(R)$ will have noticed that all results and proofs directly generalize to this class of 
manifolds. Therefore the Nielsen zeta function for maps on infra-solvmanifolds of type $(R)$ will always be a rational function.
We have chosen to formulate everything in terms of infra--nilmanifolds because this class of manifolds is much wider known and because the 
original rationality question was formulated in terms of these manifolds.\end{remark}
\section{Some examples}
In this section, we will illustrate our results by considering maps on a 3--dimensional flat manifold, so we are considering the case $G=\R^3$. This situation is notationally much simpler than the general case, because we can identify the Lie algebra of $\R^3$ with $\R^3$ itself and so, for example, we will have that $\D_\ast=\D$, etc. On the other hand, this situation is general enough to illustrate all possibilities of the formulas above.

\medskip

Take $\{e_1,e_2,e_3\}$ as the standard basis of $\R^3$. Let $\Gamma$ be the $3$-dimensional Bieberbach group generated by the elements $(e_1,I), (e_2,I),(a,\A)$, with 
$$\A= \left(\begin{array}{ccc}
-1 & 0 & 0 \\
0& -1 & 0 \\
0& 0 & 1 
\end{array}\right) \textrm{ and } a= \left(\begin{array}{c}
 0 \\
0 \\
\frac{1}{2}
\end{array}\right).$$Note that 
$(a,\A)^2=(e_3,I)$, hence $F\cong \Z_2$ and $\Gamma\cap \R^3=\Z^3$ is a lattice of $\R^3$.

\medskip

Consider the affine map $(\dd,\D):\R^3\to \R^3$ with $$\D= \left(\begin{array}{ccc}
4 & 2 & 0 \\
-1 & 1 & 0 \\
0& 0 & 5 
\end{array}\right) \textrm{ and } \dd= \left(\begin{array}{c}
 0 \\
0 \\
0
\end{array}\right).$$

One can check that 
\[ (e_3,I)^2 (a,\A)(\dd,\D) = (\dd,\D)(a,\A)\]
from which it now easily follows that $(\dd,\D) \Gamma  \subseteq \Gamma(\dd,\D)$ and 
hence there is a morphism $\varphi:\Gamma \rightarrow \Gamma$ such that 
\[ \forall \gamma\in \Gamma:\; \varphi(\gamma) (\dd,\D) = (\dd,\D) \gamma,\]
showing that  $(\dd,\D)$ induces a map $f:\Gamma\backslash \R^3 \rightarrow \Gamma\backslash \R^3$.

\medskip

The eigenvalues of  $\D$ are $2,\;3$ and $5$.   By using the formulas from Theorem \ref{LeeForm}, we find that $$L(f^k)=\frac{(1-5^k)((1-2^k)(1-3^k)+(1+2^k)(1+3^k))}{2}=(1-5^k)(1+6^k)=1^k-5^k+6^k-30^k.$$By using the fact that $$\sum_{k=1}^{\infty}\frac{\lambda^k z^k}{k}=-\ln(1-\lambda z),$$we find that $$L_f(z)=\frac{(1-5z)(1-30z)}{(1-z)(1-6z)}.$$Because every eigenvalue of $\D$ is strictly larger than $1$, we have that $D_{>1}=\D$ and because 
$\det(\A)=1$, it follows immediately that $\Gamma=\Gamma_+$. With the same notation as above, we see that $p=3$ and $n=0$. Therefore, by Theorem \ref{NielsenLefschetz} and Theorem \ref{Nielsenzeta}, we find that $$N(f^k)=-L(f^k) \textrm{ and } N_f(z)=L_f(z)^{-1}= \frac{(1-z)(1-6z)}{(1-5z)(1-30z)}.$$

Now consider the map $g:\Gamma\backslash \R^3\to \Gamma\backslash \R^3$, induced by the affine map $(\dd',\D'):\R^3\to \R^3$ with $$\D'= \left(\begin{array}{ccc}
-2 & 8 & 0 \\
-1 & 4 & 0 \\
0& 0 & -3 
\end{array}\right) \textrm{ and } \dd'= \left(\begin{array}{c}
 0 \\
0 \\
0
\end{array}\right).$$
The fact that $(\dd',\D')$ induces a map on $\Gamma \backslash \R^3$, follows from the fact that 
\[ (e_3,I)^{-2}(a,\A)(\dd',\D') = (\dd',\D')(a,\A).\] 
Again, a straightforward calculation shows that $0$, $2$ and $-3$ are the eigenvalues of $\D'$. In a similar way as before, one can check that $$L(g^k)=1-(-3)^k \textrm{ and } L_g(z)=\frac{1+3z}{1-z}.$$Note that $\A$ and $\D'$ are simultaneously diagonalizable. Using this diagonalization, we have that 
\[ D_{>1}=\left(\begin{array}{cc}
2 & 0 \\ 0 & -3
\end{array}\right)\mbox{ and }
\rho_{>1} (\A)=\left( \begin{array}{cc}
-1 & 0 \\0 & 1
\end{array}\right) .\] 
Since $\det(\rho_{>1}(\A))=-1$ we know $\Gamma\neq \Gamma_+$. In fact $\Gamma_+=\Gamma\cap \R^3$, from which it follows that $g_+$ is a map on the $3$-dimensional torus $T^3$, such that $$L(g_+^k)=\det(I-\D)=(1-2^k)(1-(-3)^k)=1^k-2^k-(-3)^k+(-6)^k.$$We deduce that $$L_{g_+}(z)=\frac{(1-2z)(1+3z)}{(1-z)(1+6z)}.$$Because $p=n=1$ and because of Theorem \ref{Nielsenzeta}, we find  $$N_g(z)=\frac{L_{g_+}(-z)}{L_{g}(-z)}=\frac{1+2z}{1-6z}.$$Note that this expression for the Nielsen zeta function allows us to say that $$N(g^k)=6^k-(-2)^k,$$which could also be computed by using Theorem \ref{NielsenLefschetz}.

\end{document}